\DeclareMathSymbol{\mrq}{\mathord}{operators}{`'}
\newtheorem{thm}{Theorem}
\newtheorem{lemma}{Lemma}
\newtheorem*{obs*}{Observation}
\newtheorem{prop}{Proposition}
\newtheorem{prob}{Problem}
\newtheorem{definition}{Definition}
\newtheorem{claim}{Claim}
\newcommand{\cG}{{\cal G}}
\newcommand{\cF}{{\cal F}}
\newcommand{\cS}{{\cal S}}
\newcommand{\cH}{{\cal H}}
\newcommand{\ceil}[1]{\left\lceil{#1}\right\rceil}
\newcounter{mycount}
\def\0{\mathbf{0}}
\begin{document}
\lstset{language=Python}          

\title{On the equality of domination number and $ 2 $-domination number}
\author{G\"{u}lnaz Boruzanl{\i} Ekinci\footnote{Department of Mathematics, Faculty of Science, Ege University,  Izmir, Turkey\\E-mail address: gulnaz.boruzanli@ege.edu.tr} \hspace{0.2cm}and\hspace{0.2cm}Csilla Bujt\'as\footnote{Faculty of Information Technology, University of Pannonia, Veszpr\'em, Hungary; and Faculty of Mathematics and Physics, University of Ljubljana, Slovenia\\E-mail address: bujtas@dcs.uni-pannon.hu}}

\maketitle
\begin{abstract}
The $2$-domination number $\gamma_2(G)$ of a graph $G$ is the minimum cardinality of a set $ D \subseteq V(G) $ for which
every vertex outside $ D $ is adjacent to at least two vertices in $ D $. Clearly,  $ \gamma_2(G) $ cannot be smaller than the 
domination number  $ \gamma(G) $. We consider a large class of graphs and characterize those members which satisfy 
$\gamma_2=\gamma$. For the general case, we prove that it is NP-hard to decide whether $\gamma_2=\gamma$ holds. We also 
give a necessary and sufficient condition for a graph to satisfy the equality hereditarily.

\end{abstract}

\bigskip

\noindent{\bf Keywords}: Domination number, $ 2 $-domination number, Hereditary property, Computational complexity.

\bigskip

\noindent{\bf MSC:}  05C69, 05C75, 68Q25.

\section{Introduction}
In this paper, we continue to expand on the study of graphs that satisfy the equality $\gamma(G) = \gamma_2(G)$, where $\gamma(G)$ and $\gamma_2(G)$ stand for the domination number and the $ 2 $-domination number of a graph $ G $, respectively. If $\gamma(G) = \gamma_2(G)$ holds for a graph $G$, then we call it $ (\gamma,\gamma_2) $\textit{-graph}. We prove that the corresponding recognition problem is NP-hard and there is no forbidden subgraph characterization for $ (\gamma,\gamma_2) $-graphs in general. On the other hand, in one of our main results, we consider a large graph class $\cH$ and give a special type of forbidden subgraph characterization for $ (\gamma,\gamma_2) $-graphs over $\cH$. Although the number of these forbidden subgraphs is infinite, we prove that the recognition problem is solvable in polynomial time on $\cH$. 
Putting the question into another setting, we give a complete characterization for $(\gamma, \gamma_2)$-perfect graphs, that is, we characterize the graphs for which all induced subgraphs with minimum degree at least two satisfy the equality of domination number and $ 2 $-domination number. 

\subsection{Terminology and Notation}

\indent Let $ G $ be a simple undirected graph, where $ V(G) $ and $ E(G) $ denote the set of vertices and the set of edges of $ G $, respectively. The \textit{(open) neighborhood} of a vertex $ v $ is the set $ N_G(v) = \{u \in V(G): uv \in E(G)\} $ and its \textit{closed neighborhood} is $ N_G[v] = N_G(v) \cup\{v\}$. The \textit{degree} of $ v $ is given by the cardinality of $ N_G(v) $, that is, $ \deg_G(v)  =|N_G(v)| $. We will write $N(v)$, $N[v]$ and $\deg(v)$ instead of $N_G(v)$, $N_G[v]$ and $\deg_G(v)$, if $G$ is clear from the context. An edge $ uv $ is a \textit{pendant edge} if $ \deg(u)=1 $ or $ \deg(v)=1 $, otherwise the edge is \textit{non-pendant}. The minimum and maximum vertex degrees of $ G $ are denoted by $ \delta(G) $ and $ \Delta(G) $, respectively. For a subset $ S\subseteq V(G) $, let $ G[S] $ denote the subgraph induced by $S$. We say that $S$ is \textit{independent} if $G[S]$ does not contain any edges. For disjoint subsets $ U, W \subseteq V(G)$, we let $ E[U,W] $ denote the set of edges between $ U $ and $ W $. 

For a positive integer $ k $, the \textit{$ k^{th} $ power} of a graph $ G $, denoted by $ G^k $, is the graph on the same vertex set as $ G $ such that $ uv $ is an edge if and only if the distance between $ u $ and $ v $ is at most $ k $ in $ G $. An edge $ uv \in E(G) $ is \textit{subdivided} by deleting the edge $ uv $, then adding a new vertex $ x $ and two new edges $ ux $ and $ xv $. Let $ K_n $, $ C_n $ and $P_n$ denote the complete graph, the cycle and the path, all of order $ n $, respectively; and let $ S_n $ denote the star of order  $ n+1 $. For any positive integer $ n $, let $ [n] $ be the set of positive integers not exceeding $ n $. For notation and terminology not defined here, we refer the reader to \cite{West2001}.

For a positive integer $ k $, a subset $ D \subseteq V(G) $ is a \textit{$ k $-dominating set} of the graph $ G $ if $ |N_G(v) \cap D|\geq k $ for every $ v \in V(G)\setminus D $. The \textit{$ k $-domination number} of $ G $, denoted by $ \gamma_k(G) $,  is the minimum cardinality among the $ k $-dominating sets of $ G $. Note that the $ 1 $-domination number, $ \gamma_1(G) $, is the classical domination number $ \gamma(G) $.

A graph $G$ is called \textit{$F$-free} if it does not contain any induced subgraph isomorphic to $F$. More generally, let $\cF$ be a (finite or infinite) class of graphs, then $G$ is $\cF$-free if it is $F$-free for all $F\in \cF$. On the other hand, let $G^D$ denote a graph $G$ with a specified subset $D \subseteq V(G)$. Then, $F^{D'}$ is a (induced) subgraph of $G^D$ if $F$ is a (induced) subgraph of $G$ and $D'=V(F)\cap D$. We say that $F_1^{D_1}$ is isomorphic to $F_2^{D_2}$ if there is an edge-preserving bijection between $V(F_1)$ and $V(F_2)$ which maps $D_1$ onto $D_2$. Analogously, we may define the $F^{D'}$-freeness of $G^D$ and forbidden (induced) subgraph characterization with a specified vertex subset $D$. 

\subsection{Preliminary results}
The concept of $ k $-domination in graphs was introduced by Fink and Jacobson \cite{Fink85, Fink85-2} and it has been studied extensively by many researchers (see for example \cite{bonomo2018, Bujtas2017, Caro1990-2, Caro1990, desormeaux2014, Favaron1988, Favaron2008, Hansberg2015, Hansberg2013, krzywkowski2017, Shaheen2009, yue2020}). For more details, we refer the reader to the books on domination by Haynes, Hedetniemi and Slater \cite{ DominationBook2, DominationBook1} and to the survey on $ k $-domination and $ k $-independence by Chellali \textit{et al.}\ \cite{ChellaliSurvey}.

Fink and Jacobson \cite{Fink85} established the following basic theorem.  
\begin{thm}\label{thm:FJ} \cite{Fink85}
	For any graph $ G $ with $ \Delta(G)\geq k\geq 2 $, $ \gamma_k(G) \geq \gamma(G)+k-2$.
\end{thm}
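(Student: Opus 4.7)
The plan is to prove $\gamma(G) \le \gamma_k(G) - (k-2)$ directly by a swap argument: given a minimum $k$-dominating set $D$, I will produce a (plain) dominating set $D'$ with $|D'| \le |D| - k + 2$. The hypothesis $\Delta(G) \ge k$ supplies a vertex $v$ with $\deg(v) \ge k$, which will be the pivot of the construction. The case analysis is organised by the relationship of $v$ to $D$.

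First, if $v \notin D$, then $v$ has at least $k$ neighbours in $D$; pick any $k-1$ of them, say $u_1,\dots,u_{k-1}$, and set $D' = (D \setminus \{u_1,\dots,u_{k-1}\}) \cup \{v\}$. Each $u_i$ is dominated by $v \in D'$, while every other $w \in V(G) \setminus D$ originally had at least $k$ neighbours in $D$, so at least one remains in $D'$. Hence $D'$ dominates $G$ and has size $|D| - k + 2$.

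Next, suppose $v \in D$. If $|N(v) \cap D| \ge k-2$, I delete any $k-2$ such neighbours from $D$; every deleted vertex is dominated by $v \in D'$, and every $w \in V(G) \setminus D$ still has $\ge k - (k-2) = 2$ neighbours in $D'$. Otherwise $|N(v) \cap D| < k-2$, so since $\deg(v) \ge k$ there is a neighbour $u$ of $v$ outside $D$; then $u$ has at least $k$ neighbours in $D$, of which at least $k-1$ lie in $D \setminus \{v\}$. Choose any $k-2$ of them as $u_1,\dots,u_{k-2}$ and set $D' = \bigl(D \setminus \{v, u_1,\dots,u_{k-2}\}\bigr) \cup \{u\}$. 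Here $v$ and each $u_i$ are dominated by $u \in D'$, and any other $w \in V(G) \setminus D$ loses at most $k-1$ of its original $\ge k$ neighbours in $D$, so at least one survives in $D'$. In all three cases $D'$ is a dominating set of size $|D| - k + 2$, which yields $\gamma(G) \le \gamma_k(G) - k + 2$, equivalent to the claim.

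The only genuine subtlety is the last subcase, where $v$ lies in $D$ but has too few $D$-neighbours to absorb $k-2$ deletions on its own; the fix is to \emph{replace} $v$ by an outside neighbour $u$, exploiting the $k$-domination condition on $u$ to provide a pool of $k-2$ safe deletions. Choosing the swap so that domination of the deleted vertices is preserved is exactly what makes the bound $k-2$ (rather than $k-1$) tight in this argument.
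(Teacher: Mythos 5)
Your argument is correct and complete: the three cases ($v\notin D$; $v\in D$ with $|N(v)\cap D|\ge k-2$; $v\in D$ with $|N(v)\cap D|<k-2$) are exhaustive, each exchange produces a set of size exactly $|D|-k+2$, and in every case you correctly account for why the deleted vertices and the vertices outside $D$ remain dominated. Note that the paper itself gives no proof of this statement --- it is quoted from Fink and Jacobson \cite{Fink85} --- so there is nothing internal to compare against; your exchange argument is essentially the standard one for this bound, and the only remark worth adding is that for $k=2$ the statement is immediate (every $2$-dominating set is a dominating set), which your Case~2 with zero deletions already covers.
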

Although it is proved that the above inequality  is sharp for every $k\ge 2$, the characterization of graphs attaining the equality is still open, even for the case when  $ k = 2$. The corresponding characterization problem was studied in~\cite{Hansberg2015, Hansberg2016, Hansberg2008}, 
while similar problems involving different domination-type graph and hypergraph invariants were considered for example in~\cite{Arumugam2013, Blidia2006, hartnell1995, krzywkowski2017, Randerath1998}.

In this paper, we study $ (\gamma,\gamma_2) $-graphs that is graphs for which Theorem~\ref{thm:FJ} holds with equality if $k=2$. Note that $ G $ is a $ (\gamma,\gamma_2) $-graph, that is $\gamma_2(G)=\gamma(G)$, if and only if every component  of $ G $ is a $ (\gamma,\gamma_2) $-graph. Thus, we only deal with connected graphs in the rest of the paper.

Hansberg and Volkmann \cite{Hansberg2008} characterized the cactus graphs (i.e., graphs in which no two cycles share an edge)  which are $ (\gamma,\gamma_2) $-graphs and they also gave some general properties of the graphs attaining the equality. In 2016, the claw-free (i.e., $S_3$-free) $ (\gamma,\gamma_2) $-graphs and the line graphs which are $ (\gamma,\gamma_2) $-graphs were characterized  by Hansberg \textit{et al.} \cite{Hansberg2016}. We will refer to the following basic lemmas proved in these papers.

\begin{lemma} \cite{Hansberg2008} \label{lem:0}
	If $ G $ is a connected nontrivial graph with $ \gamma_2(G)=\gamma(G) $, then $ \delta(G)\geq 2 $.
\end{lemma}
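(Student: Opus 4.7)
The plan is to proceed by contradiction. Since $G$ is connected and nontrivial, no vertex can be isolated, so $\delta(G)\ge 1$ is automatic; it only remains to exclude the existence of a leaf. So I suppose, toward a contradiction, that $v$ is a vertex of degree $1$ with unique neighbor $u$, while $\gamma_2(G) = \gamma(G)$.

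Let $D$ be a minimum $2$-dominating set, so $|D| = \gamma_2(G) = \gamma(G)$. My first observation is that $v$ must lie in $D$: otherwise the $2$-domination condition at $v$ would require two distinct neighbors of $v$ in $D$, while $v$ has only the neighbor $u$. From here my goal is to extract from $D$ an ordinary dominating set of size $|D|-1$, which will contradict $\gamma(G)=|D|$.

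I would split into two cases according to whether $u\in D$. In the easy case $u\in D$, I take $D\setminus\{v\}$: the leaf $v$ is dominated by $u$, and every vertex outside $D$ kept at least one of its $\ge 2$ original neighbors in $D$ after $v$ is removed. In the remaining case $u\notin D$, the $2$-domination condition at $u$ produces some $u'\in D\setminus\{v\}$ adjacent to $u$, and I consider $D' = (D\setminus\{v,u'\})\cup\{u\}$, which has size $|D|-1$. To see that $D'$ dominates, note that $v$ is covered by $u$, that $u'$ is covered by its neighbor $u$, and that every other vertex $w$ outside $D$ had two neighbors in $D$ of which at most one (namely $u'$) could be deleted --- this is because $v$'s only neighbor is $u$, so removing $v$ from $D$ affects the neighborhood count of no vertex except $u$, and $u\in D'$.

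The whole argument is a short swap-and-delete exchange, and I do not anticipate a serious obstacle. The one point that needs care is the verification in the second case that no vertex loses both of its two guaranteed neighbors when we pass from $D$ to $D'$; this ultimately comes for free from the fact that $\deg(v)=1$.
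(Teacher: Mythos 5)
The paper does not prove this lemma at all --- it is quoted from Hansberg and Volkmann \cite{Hansberg2008} as a known result --- so there is no in-paper argument to compare against. Your proof is correct and complete: the observation that a leaf $v$ must belong to every $2$-dominating set is right, Case 1 ($u\in D$) works because $v$ is adjacent only to $u$ and hence its removal from $D$ cannot strip any vertex outside $D\cup\{u\}$ of a dominator, and in Case 2 the swap $D'=(D\setminus\{v,u'\})\cup\{u\}$ is verified correctly, the key point being exactly the one you flag: a vertex $w\notin D$ with $w\neq u$ can lose at most the single dominator $u'$, since $v\notin N(w)$. This is the standard exchange argument for such statements and matches in spirit the exchange arguments the paper does carry out elsewhere (e.g.\ in Lemma~\ref{lem:2nghbrs} and Theorem~\ref{thm:2}); I see no gap.
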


\begin{lemma}\cite{Hansberg2016} \label{lem:1}
	Let $D$ be a minimum $ 2 $-dominating set of a graph $G$. If $ \gamma_2(G)=\gamma(G) $, then $ D $ is independent.
\end{lemma}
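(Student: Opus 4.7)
My plan is to argue by contradiction. Suppose $D$ is a minimum $2$-dominating set with $|D|=\gamma_2(G)=\gamma(G)$, but that $D$ is \emph{not} independent, so there exist $u,v\in D$ with $uv\in E(G)$. I will show that $D':=D\setminus\{u\}$ is already a dominating set of $G$, which yields $\gamma(G)\le |D|-1<\gamma(G)$, the desired contradiction.

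The verification that $D'$ dominates $G$ splits into three routine cases. First, every vertex of $D'$ dominates itself. Second, the removed vertex $u$ is dominated by $D'$ because its neighbor $v$ is distinct from $u$ (as $uv$ is an edge) and still lies in $D'$. Third, for any $w\in V(G)\setminus D$, the $2$-dominating property gives $|N(w)\cap D|\ge 2$, and therefore $|N(w)\cap D'|\ge |N(w)\cap D|-1\ge 1$, so $w$ retains a neighbor in $D'$.

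The conceptual point powering the argument is that the hypothesis $\gamma_2(G)=\gamma(G)$ combined with the fact that every $2$-dominating set is also a (ordinary) dominating set forces $D$ itself to be a minimum dominating set. Any edge inside $D$ would then allow us to trade on the extra neighbor provided by $2$-domination: removing one endpoint of such an edge is absorbed by the other endpoint without breaking single-domination. I do not foresee a genuine obstacle here; the only subtlety worth flagging in writing is that $v\neq u$, which is immediate from $uv\in E(G)$ in a simple graph.
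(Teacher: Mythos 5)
Your proof is correct and complete: deleting one endpoint $u$ of an edge inside $D$ leaves a dominating set, since $u$ is covered by its neighbor $v$ and every vertex outside $D$ loses at most one of its two or more $D$-neighbors, contradicting $|D|=\gamma_2(G)=\gamma(G)$. The paper itself states this lemma as a cited result from Hansberg \emph{et al.}\ without reproducing a proof, and your argument is the standard one used there, so there is nothing to flag.
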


\begin{lemma}\cite{Hansberg2016}  \label{lem:2}
	Let $ G $ be a connected nontrivial graph with $ \gamma_2(G)=\gamma(G) $ and let $ D $ be a minimum $ 2 $-dominating set of $ G $. Then, for each vertex $ u' \in V\setminus D $ and $ u,v \in D \cap N(u') $, there is a vertex $ v' \in V \setminus D $ such that $ u,u',v $ and $ v'  $ induce a $ C_4 $.
\end{lemma}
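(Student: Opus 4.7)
The plan is to argue by contradiction: assume that for some $u'\in V\setminus D$ and some pair $u,v\in D\cap N(u')$ there is no $v'\in V\setminus D$ for which $\{u,u',v,v'\}$ induces a $C_4$. The strategy is then to construct a dominating set of $G$ of size $|D|-1$, which contradicts $\gamma(G)=\gamma_2(G)=|D|$.

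First, I would invoke Lemma~\ref{lem:1}, which forces $D$ to be independent, so $uv\notin E(G)$ automatically; also $uu',\,vu'\in E(G)$ by hypothesis. Consequently, a vertex $v'\in V\setminus D$ makes $\{u,u',v,v'\}$ induce $C_4$ if and only if $v'\in N(u)\cap N(v)$, $v'\neq u'$, and $v'u'\notin E(G)$. The contradiction hypothesis therefore translates to: every vertex $w\in (N(u)\cap N(v))\setminus (D\cup\{u'\})$ satisfies $wu'\in E(G)$.

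The candidate replacement set is $D'=(D\setminus\{u,v\})\cup\{u'\}$, with $|D'|=|D|-1$. I would then verify that $D'$ dominates $G$ by splitting $V\setminus D'$ into three cases: (i) the reinserted vertices $u$ and $v$ are each dominated by $u'\in D'$; (ii) any $w\in V\setminus D$ with $w\neq u'$ that has a neighbor in $D\setminus\{u,v\}$ is dominated directly; (iii) a vertex $w\in V\setminus D$ with $w\neq u'$ whose neighbors in $D$ all lie in $\{u,v\}$ must, by the $2$-dominating property of $D$, have both $u$ and $v$ as neighbors, so $w\in (N(u)\cap N(v))\setminus D$; the contradiction assumption then supplies $wu'\in E(G)$, and $w$ is dominated by $u'\in D'$.

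Therefore $D'$ is a dominating set of cardinality $|D|-1<\gamma(G)$, contradicting the definition of $\gamma(G)$. The main obstacle, and the move I would isolate first, is identifying the right swap: trading the two vertices $u,v\in D$ for the single vertex $u'\in V\setminus D$. Once this replacement is on the table, the case analysis above is mechanical, and the $2$-dominating hypothesis is consumed precisely in case (iii), which is the only place where the hypotheses that $D$ is minimum, $2$-dominating, and (via Lemma~\ref{lem:1}) independent must all interact.
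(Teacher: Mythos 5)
Your proof is correct: the swap $D'=(D\setminus\{u,v\})\cup\{u'\}$ together with the $2$-domination property handling the vertices whose only $D$-neighbours are $u$ and $v$ is exactly the right argument. The paper cites this lemma from \cite{Hansberg2016} without reproving it, but the identical exchange technique is what the paper uses to prove the strengthened Lemma~\ref{lem:2nghbrs}, so your approach is essentially the same as the one in the text.
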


We strengthen Lemma \ref{lem:2} by proving the following statement.

\begin{lemma}
	\label{lem:2nghbrs}
	Let $ G $ be a connected nontrivial graph with $ \gamma_2(G)=\gamma(G) $ and let $ D $ be a minimum $ 2 $-dominating set of $ G $. For every pair $ u,v \in D$, if $ N_G(u) \cap N_G(v) \neq \emptyset $, then there exists a nonadjacent pair $ u',v' \in V\setminus D $ such that $ N_G(u') \cap D =  N_G(v')\cap D = \{u,v\}$.
\end{lemma}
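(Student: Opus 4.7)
The plan is to argue by contradiction and construct a dominating set of $G$ of size $|D|-1$, which would contradict $\gamma(G)=|D|$. First I would set $X := N_G(u) \cap N_G(v)$; by Lemma~\ref{lem:1}, $D$ is independent, so every common neighbor of the $D$-vertices $u$ and $v$ must lie in $V \setminus D$, and hence $X \subseteq V \setminus D$ with $X \neq \emptyset$ by hypothesis. I would then isolate the subset $X_0 := \{w \in X : N_G(w) \cap D = \{u,v\}\}$. Since a pair $u',v'$ witnessing the conclusion of the lemma must belong to $X_0$, the failure of the lemma amounts to the assertion that $X_0$ contains no nonadjacent pair, i.e., $X_0$ induces a clique in $G$ (possibly empty or a singleton); this is the working assumption for the contradiction.

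Under this assumption I would select $x \in X_0$ if $X_0 \neq \emptyset$, and any $x \in X$ otherwise, and set $D^{\star} := (D \setminus \{u,v\}) \cup \{x\}$. Since $u \neq v$ and $x \in V \setminus D$, the size equals $|D|-1$. The key step is to verify that $D^{\star}$ dominates $G$. The vertices $u$ and $v$ are covered because $x \in N_G(u) \cap N_G(v)$, and every member of $D \setminus \{u,v\}$ remains in $D^{\star}$. For any other $w \in V \setminus D$ with $w \neq x$, the 2-domination property of $D$ gives $|N_G(w) \cap D| \geq 2$, so the only potentially problematic case is $N_G(w) \cap D = \{u,v\}$, i.e., $w \in X_0$; otherwise $w$ already has a neighbor in $D \setminus \{u,v\} \subseteq D^{\star}$. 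If $X_0 = \emptyset$ this case is vacuous, and if $X_0 \neq \emptyset$ then $x \in X_0$ and the clique structure forces $w \sim x$, so $w$ is dominated by $x$.

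Together these facts make $D^{\star}$ a dominating set of size $\gamma(G) - 1$, contradicting the minimality of $\gamma(G)$. The main obstacle is seeing that one should swap the \emph{pair} $\{u,v\}$ for a single vertex rather than attempting a one-for-one replacement: deleting only $u$ and compensating by a single vertex leaves $u$ itself unable to acquire two $D$-neighbors (recall $D$ is independent), while the two-for-one swap uses the 2-domination of $D$ and the independence of $D$ in tandem, and confines all possible obstructions to $X_0$, where the clique hypothesis of the contradiction dispatches them uniformly.
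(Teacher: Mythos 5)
Your proof is correct and uses essentially the same idea as the paper: both derive a contradiction from the exchange set $(D\setminus\{u,v\})\cup\{x\}$ for a common neighbor $x$, using the $2$-domination of $D$ to confine the possible undominated vertices to those $w$ with $N_G(w)\cap D=\{u,v\}$. If anything, your version is slightly more careful than the paper's, which states a pointwise claim for each $x\in N_G(u)\cap N_G(v)$ and leaves implicit the second application needed to obtain a nonadjacent pair both of whose $D$-neighborhoods are exactly $\{u,v\}$.
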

\begin{proof}
	For every vertex $ x \in N_G(u)\cap N_G(v)  $, there is a vertex $ y  $ different from $x$ such that $N_G(y)\cap D = \{u,v\}$ and $xy \notin E(G)$, since otherwise  $ (D \setminus \{u,v\})\cup\{x \} $ would be a dominating set of $ G $, a contradiction. This proves that we have at least two non-adjacent vertices $u'$ and $v'$ with the property $ N_G(u') \cap D =  N_G(v')\cap D = \{u,v\}$.
\end{proof}

The following simple proposition demonstrates that $(\gamma,\gamma_2)$-graphs form a rich class and it indicates the possible difficulties in a general characterization.

\begin{prop}
	\label{prop:0}
	There is no forbidden (induced) subgraph for the graphs satisfying the equality of domination number and $ 2 $-domination number.
\end{prop}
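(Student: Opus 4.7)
The plan is to show that every graph $F$ embeds as an induced subgraph of some connected $(\gamma,\gamma_2)$-graph, which immediately precludes the existence of any forbidden (induced) subgraph for the class. Equivalently, I want a uniform gadget that, attached to an arbitrary $F$, turns it into a graph on which the $2$-domination number and the domination number coincide.

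Given $F$ with $V(F)\neq\emptyset$, the construction I have in mind is to adjoin three new vertices $u_1$, $u_2$, and $w$ to $V(F)$: join both $u_1$ and $u_2$ to every vertex of $V(F)$, and add the two edges $u_1w$ and $u_2w$ (so in particular $u_1u_2\notin E(G)$, and $w$ is not joined to any vertex of $F$). Since no edges are placed inside $V(F)$, we have $G[V(F)]=F$, so $F$ sits in $G$ as an induced subgraph, and the resulting $G$ is connected.

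The next step is to verify that $\gamma(G)=\gamma_2(G)=2$. For the upper bound, every vertex of $V(F)\cup\{w\}$ is adjacent to both $u_1$ and $u_2$, so $\{u_1,u_2\}$ is simultaneously a dominating set and a $2$-dominating set, giving $\gamma(G)\le\gamma_2(G)\le 2$. For the matching lower bound it suffices to check that $G$ has no universal vertex, since $\gamma(G)=1$ would require one: indeed, $u_1$ is not adjacent to $u_2$ (and vice versa), $w$ is not adjacent to any vertex of $V(F)$, and every vertex of $V(F)$ is not adjacent to $w$. Hence $\gamma(G)\ge 2$, and the desired equality follows.

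There is no real obstacle in the argument; the only point requiring care is preventing $\gamma(G)$ from collapsing to $1$, and this is precisely the role of the auxiliary vertex $w$ together with the absence of the edge $u_1u_2$. Since $F$ was arbitrary (the empty case being vacuous), the construction shows that the class of $(\gamma,\gamma_2)$-graphs admits no forbidden (induced) subgraph.
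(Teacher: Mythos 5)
Your proof is correct and follows essentially the same approach as the paper: both attach every vertex of $F$ to two new non-adjacent hub vertices and add extra private neighbors of those hubs to rule out a universal vertex, yielding $\gamma=\gamma_2=2$ (the paper uses the two remaining vertices of a $C_4$ where you use the single vertex $w$).
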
  
\begin{proof} Consider an arbitrary graph $F$ and a four-cycle $ C_4 $, which is vertex-disjoint to $F$. Let $ u $ and $ v $ be two non-adjacent vertices of $ C_4$. Construct the graph $ G_F $ by joining each vertex of $ F $ to both $ u $ and $ v $. Since, for any $F$, the graph $ G_F$ contains $ F $ as an induced subgraph and it satisfies the equality $ \gamma_2(G_F) = \gamma(G_F) =2$, there is no forbidden induced  subgraph for $(\gamma,\gamma_2)$-graphs.
\end{proof}

As a consequence of the Lemmas \ref{lem:0}-\ref{lem:2nghbrs}, we will prove that all $(\gamma,\gamma_2)$-graphs belong to the following graph class $\cG$ that we define together with its subclasses $\cG_1$ and $\cG_2$.
\begin{definition} Given an arbitrary simple graph $F$ with vertex set $V(F)=D=\{v_1,\dots v_d\}$, a graph $G$ belongs to the class $\cG(F)$ if $G$ can be obtained from $F$ by the following rules.
	\begin{itemize}
	
	\item[$(i)$] Define a pair of vertices $X_{i,j}=\{x_{i,j}^1, x_{i,j}^2\}$ for every edge $v_iv_j$ of $F$, and further, let $Y$ be an arbitrary (possibly empty) set of vertices, such that $D$, $Y$ and all the pairs $X_{i,j}$ are mutually disjoint sets of vertices. Define  $V(G)=D \cup X \cup Y$, where $X=\bigcup_{v_iv_j\in E(F)}X_{i,j}$. 
	\item[$(ii)$] The edges between $D$ and $X\cup Y$ are defined such that  $N_G(x_{i,j}^s)\cap D=\{v_i,v_j\}$ for every vertex $x_{i,j}^s\in X$, and the set $N_G(u)\cap D$ contains at least two vertices and induces a complete subgraph in $F$ for any $u\in Y$. The induced subgraph $G[D]$ cannot contain edges.
	\item[$(iii)$] The edges inside $X\cup Y$ can be chosen arbitrarily, but each $X_{i,j}$ must remain independent. 
\end{itemize}
	Moreover, $G$ belongs to $\cG_1(F)$ if $ |N_G(y)\cap D| = 2 $ for each $ y \in Y $; and $G$ belongs to $\cG_2(F)$ if $Y=\emptyset$. The graph classes $\cG$, $\cG_1$, $\cG_2$ contain those graphs $G$ for which there exists a graph $F$ such that $G$ belongs to $\cG(F)$, $\cG_1(F)$, $\cG_2(F)$, respectively. 
\end{definition}

For $G\in \cG(F)$ with the fixed partition $V(G)=D\cup X\cup Y$ as per above definition, a vertex $v$ is a \textit{$D$-vertex} (or original vertex) if $v\in D$; $v$ is a \textit{subdivision vertex} if $v\in X$; and $v$ is a \textit{supplementary vertex} if $v\in Y$. The edges inside $G[X\cup Y]$ are called \textit{supplementary edges}, and $F$ is said to be the \textit{underlying graph} of $G$. In Section 5, we will show that the underlying graph is not necessarily unique by presenting a $(\gamma, \gamma_2)$-graph having two non-isomorphic underlying graphs. Note that the construction in the proof of Proposition \ref{prop:0} always belongs to the class $ \cG_1 $. Hence, Proposition \ref{prop:0} remains true under the condition $ G \in \cG_1 $. This motivates us to focus on the smaller class $\cG_2$.

Alternatively, we may define the graph class $\cG_2(F)$ in the following constructive way. Let $ F $ be a simple graph with vertex set $ V(F) $ and edge set $ E(F) $. Consider the \textit{double subdivision graph} $ F^* $ obtained by substituting each edge $ v_iv_j $ by two parallel edges and subdividing each edge once by adding the vertices $ x_{i,j}^1 $ and $ x_{i,j}^2 $. Let $ X_{i,j}  = \{x_{i,j}^1, x_{i,j}^2\} $ and define the set of subdivision vertices $X = \bigcup_{v_iv_j \in E(F)}^{} X_{i,j}$. The graph class $\cG_2(F)$ consists of the graphs obtained by adding some (maybe zero) supplementary edges between subdivision vertices of $ F^* $ such that each $ X_{i,j}$ remains independent. 

\begin{prop}
	\label{prop:1}
	If $G$ is a graph with $ \gamma_2(G) = \gamma(G) $, then  $G \in \mathcal{G} $.
\end{prop}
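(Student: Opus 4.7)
The strategy is to pick any minimum $2$-dominating set $D$ of $G$ and use it as the vertex set of the underlying graph $F$, then to read off $F$, the subdivision pairs $X_{i,j}$, and the supplementary set $Y$ directly from the neighborhood structure of $G$ relative to $D$. Concretely, by Lemma~\ref{lem:1} the set $D$ is independent, which already gives the requirement that $G[D]$ be edgeless. Define $F$ on vertex set $D$ by making $v_iv_j$ an edge if and only if $v_i$ and $v_j$ admit a common neighbor in $V(G)\setminus D$, i.e.\ $N_G(v_i)\cap N_G(v_j)\neq\emptyset$.

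The pairs $X_{i,j}$ are then supplied by Lemma~\ref{lem:2nghbrs}: for every edge $v_iv_j\in E(F)$ it yields two non-adjacent vertices $x_{i,j}^1,x_{i,j}^2\in V(G)\setminus D$ with $N_G(x_{i,j}^s)\cap D=\{v_i,v_j\}$, which we declare to be $X_{i,j}$. Because the $D$-neighborhood of any vertex in $X_{i,j}$ is exactly the pair $\{v_i,v_j\}$, pairs corresponding to distinct edges of $F$ are automatically disjoint, so $X=\bigcup_{v_iv_j\in E(F)} X_{i,j}$ is a well-defined subset of $V(G)\setminus D$. We let $Y=V(G)\setminus(D\cup X)$ collect the remaining vertices.

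It then suffices to verify that this partition satisfies every clause of Definition~1. By construction each $X_{i,j}$ is independent and $N_G(x_{i,j}^s)\cap D=\{v_i,v_j\}$. For any $y\in Y$, the fact that $D$ is a $2$-dominating set forces $|N_G(y)\cap D|\geq 2$; and for any two distinct $u,v\in N_G(y)\cap D$ the vertex $y\in V(G)\setminus D$ witnesses $N_G(u)\cap N_G(v)\neq\emptyset$, so $uv\in E(F)$ by the definition of $F$. Hence $N_G(y)\cap D$ induces a complete subgraph of $F$. Finally, the definition places no constraint on the edges within $X\cup Y$ beyond the independence of each $X_{i,j}$, so we may simply take the edges inside $X\cup Y$ as inherited from $G$. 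This shows $G\in\cG(F)\subseteq\cG$.

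The only place where anything could go wrong is the existence of the required disjoint pairs $X_{i,j}$; this is precisely what Lemma~\ref{lem:2nghbrs} delivers, and the disjointness is handed to us for free because membership in $X_{i,j}$ is pinned down by the $D$-neighborhood. The rest is bookkeeping against the definition of $\cG(F)$.
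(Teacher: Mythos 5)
Your proof is correct and follows essentially the same route as the paper: the paper also takes a minimum $2$-dominating set $D$, sets $F=G^2[D]$ (which, since $D$ is independent by Lemma~\ref{lem:1}, coincides with your common-neighbor definition of $E(F)$), extracts the pairs $X_{i,j}$ from Lemma~\ref{lem:2nghbrs}, and puts the rest into $Y$. Your additional verification of the clique condition for $Y$-vertices and of the disjointness of the pairs is sound bookkeeping that the paper leaves mostly implicit.
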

\begin{proof} Assuming $ \gamma_2(G) = \gamma(G) $, choose a minimum $ 2 $-dominating set $D$ of $G$ and define the graph $ F = G^2[D] $. We first note that, by Lemma~\ref{lem:1}, $D$ is independent in $G$. Since $D$ is a $ 2 $-dominating set, every $u\in V(G)\setminus D$ has at least two neighbors in $D$ and, by the definition of $F$, the set $N_G(u)\cap D$ induces a complete subgraph in $F$. By Lemma~\ref{lem:2nghbrs}, for every edge $v_iv_j$ of $F$, there exist at least two different and non-adjacent vertices $u$, $u' \in V(G) \setminus D$ such that $ N_G(u) \cap D =  N_G(u') \cap D = \{v_i,v_j\}$. If we select such a pair and define  $X_{i,j}=\{u, u'\}$ for every $v_iv_j \in E(F)$, and let $Y=V(G)\setminus (D\cup X)$, then $G$ can be obtained from the underlying graph $F$ with the vertex partition $V(G)=D\cup X\cup Y$, proving that $G\in \cG(F)$.
\end{proof}

In a follow-up paper of the present work \cite{ekinci2020}, we studied the analogous problem for each $k \ge 3$. There we gave a characterization for connected bipartite graphs satisfying $\gamma_k(G)=\gamma(G)+k-2$ and $\Delta(G) \ge k$. This result is based on the notion of the $k$-uniform ``underlying hypergraph'' that corresponds to the underlying graph, as defined here, if $k=2$.

\subsection{Structure of the paper}

In Section 2, we define the class $\cH$ of those graphs which are contained in $\cG_2$ with an underlying graph of girth at least 5 and we give a characterization for $(\gamma, \gamma_2)$-graphs over $\cH$. Then, in Section 3, we discuss algorithmic complexity questions. First, we prove that the recognition problem of $(\gamma, \gamma_2)$-graphs is NP-hard on $\cG_1$ (even if a minimum $ 2 $-dominating set is given together with the problem instance). Then, on the positive side, we show that there is a polynomial-time algorithm which recognizes $(\gamma, \gamma_2)$-graphs over the class $\cH$ if the instance is given together with the minimum $ 2 $-dominating set $D=V(F)$. The algorithm is based on our characterization theorem and Edmond's Blossom Algorithm. In Section 4, we consider the hereditary version of the property and characterize $(\gamma, \gamma_2)$-perfect graphs. As a direct consequence, we get that $(\gamma, \gamma_2)$-perfect graphs are easy to recognize. In the concluding section, we put remarks on the underlying graphs and discuss some open problems.

\section{Characterization of $(\gamma, \gamma_2)$-graphs over $\cH$} \label{sec:2}

To formulate the main result of this section,  we will refer to the following definitions.
\begin{definition} Let $\cH$ be the union of those graph classes $\cG_2(F)$ where  the underlying graph $F$ is $(C_3,C_4)$-free.
\end{definition}

When we consider a graph $G\in \cH$, we will always assume that a fixed $(C_3,C_4)$-free underlying graph $F$ and a corresponding partition $V(G)=D\cup X$ are given. In order to indicate this structure, we will use the notation $ G^D $.

\begin{figure}[h]
	\centerline{\includegraphics[width=0.4\textwidth]{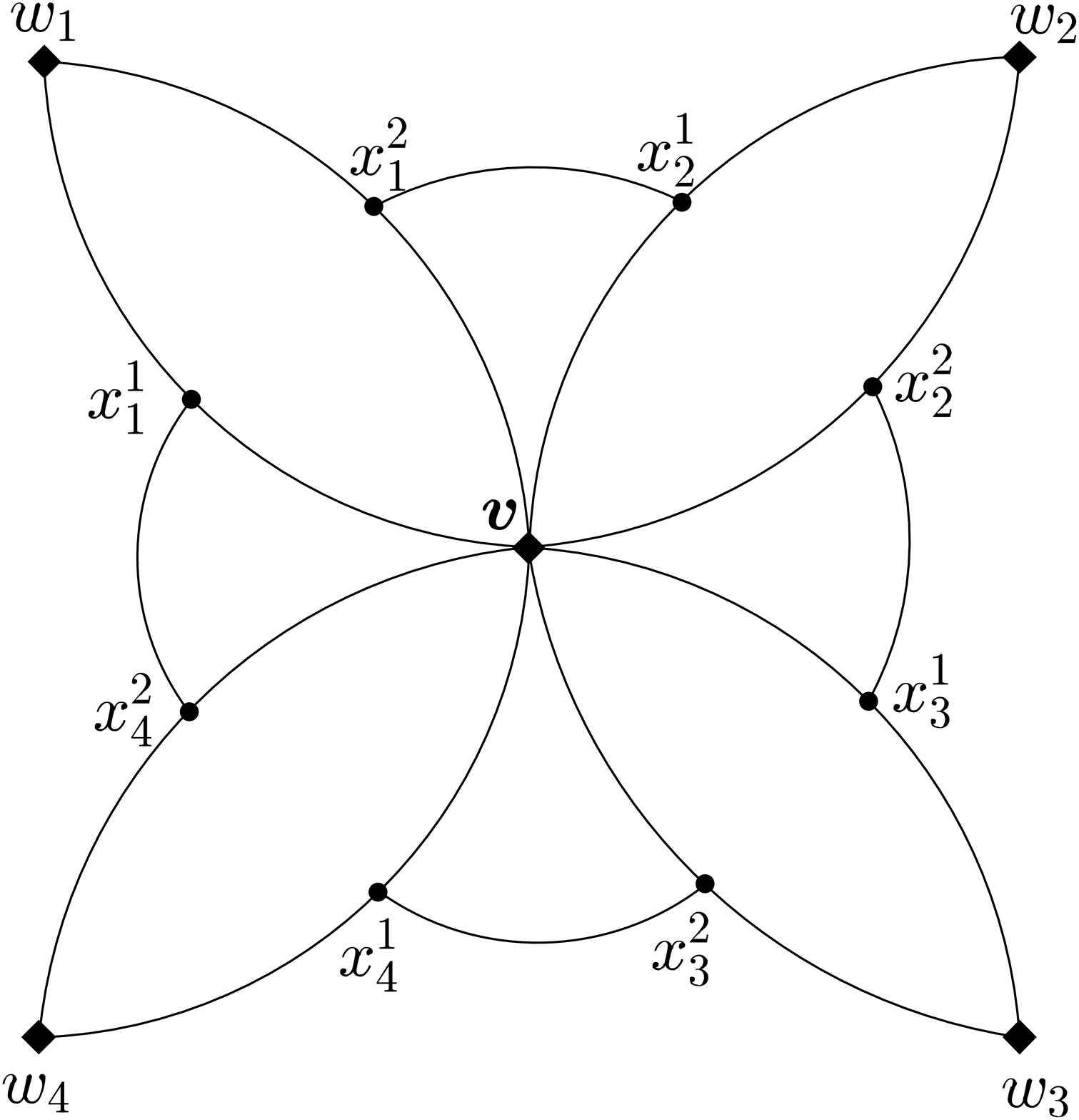}}
	\vspace*{8pt}
	\caption{The graph $ A_4 $}
	\label{fig:A4B4}
\end{figure}
\begin{figure}[h]
	\centerline{\includegraphics[width=0.7\textwidth]{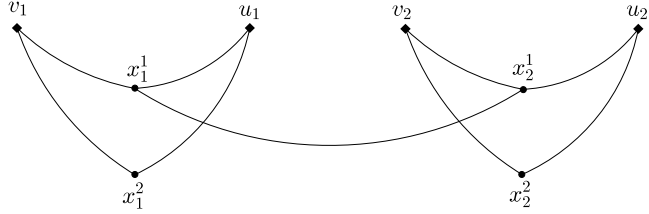}}
	\vspace*{8pt}
	\caption{The graph $ B$ }
	\label{fig:C1C2}
\end{figure}

\begin{definition}
	For a positive integer $k\geq 2$, let $A_k^W$ be the graph on the vertex set 
	$$V(A_k)=\{v, w_1,\dots ,w_k, x_1^1, \dots, x_k^1, x_1^2, \dots ,x_k^2\}$$
	and with the edge set 
	$$E(A_k)=\{vx_i^1, vx_i^2, w_ix_i^1, w_ix_i^2: 1\le i \le k\}\cup \{x_i^1x_{i+1}^2: 1\le i \le k\}\cup \{x_k^1x_1^2\}.$$
	The specified vertex set is $W_k = W =\{v\} \cup\{w_i: 1\le i \le k\}$ (for illustration see Fig.~\ref{fig:A4B4}).
\end{definition}
\begin{definition}
	Let $B^W$ be the graph  of order 8 with 
	$$V(B)=\{v_1, u_1, v_2, u_2, x_1^1,x_1^2,x_2^1,x_2^2\},$$
	$$E(B) = \{v_ix_i^1, v_ix_i^2,u_ix_i^1,u_ix_i^2: 1\le i \le 2\}\cup \{x_1^1x_2^1\}$$
	The specified vertex set is $W=\{v_1, u_1, v_2, u_2\}$ (for illustration see Fig.~\ref{fig:C1C2}).
	
\end{definition}
Note that $ A_k \in \cG_2(S_k)$ and $ B \in \cG_2(2K_2) $.

We first prove a lemma which will be referred to in the proof of our main theorem and also in later sections. 
\begin{lemma} \label{lem:d2dom}
	If $ G^D \in \mathcal{G}_1(F) $, then $D$ is a minimum $ 2 $-dominating set of $G$.
\end{lemma}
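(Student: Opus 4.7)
The plan is to establish two things: first that $D$ is a 2-dominating set (which follows from the construction), and second that $D$ is a \emph{minimum} 2-dominating set, for which I would use a short double-counting argument exploiting the $\mathcal{G}_1$-condition $|N_G(y)\cap D|=2$.

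For the first part, I would just read off the definition. By rule $(ii)$, every subdivision vertex $x_{i,j}^s\in X$ satisfies $N_G(x_{i,j}^s)\cap D=\{v_i,v_j\}$, so it has exactly two neighbors in $D$. For every supplementary vertex $y\in Y$, the condition $G^D\in \mathcal{G}_1(F)$ imposes $|N_G(y)\cap D|=2$. Thus every vertex outside $D$ has at least two neighbors in $D$, and $D$ is 2-dominating.

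The main work is showing minimality. Let $D'$ be an arbitrary 2-dominating set of $G$; I would split $D=D_1\sqcup D_2$ with $D_1=D'\cap D$ and $D_2=D\setminus D'$, and set $X'=D'\cap X$, $Y'=D'\cap Y$, so $|D'|=|D_1|+|X'|+|Y'|$. Since $G[D]$ has no edges (rule $(ii)$), any vertex of $D_2$ has no $D$-neighbor, hence each $v\in D_2$ must have at least two neighbors in $X'\cup Y'$. Now I would double-count the edges of the bipartite subgraph on $(D_2,X'\cup Y')$: the lower bound from the $D_2$-side is $2|D_2|$, while the upper bound from the other side is $2(|X'|+|Y'|)$ because every vertex of $X\cup Y$ has exactly two neighbors in $D$ (this is where the $\mathcal{G}_1$ hypothesis is essential). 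Consequently $|D_2|\le |X'|+|Y'|$, and therefore $|D'|=|D_1|+|X'|+|Y'|\ge |D_1|+|D_2|=|D|$, proving that $D$ is minimum.

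I do not expect any serious obstacle. The only point that has to be highlighted is that the argument fails for the larger class $\mathcal{G}$: without the restriction $|N_G(y)\cap D|=2$, a supplementary vertex could contribute more than two edges to $D_2$, breaking the upper bound. This is exactly the reason the lemma is stated for $\mathcal{G}_1(F)$ rather than $\mathcal{G}(F)$.
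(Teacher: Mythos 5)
Your proof is correct and follows essentially the same route as the paper's: both argue that $D$ is $2$-dominating directly from the definition, and both establish minimality by double-counting the edges between $D\setminus D'$ and $D'\setminus D$, using the independence of $D$ for the lower bound $2|D\setminus D'|$ and the $\mathcal{G}_1$-condition (exactly two $D$-neighbors for each vertex outside $D$) for the upper bound $2|D'\setminus D|$. Your closing remark about why the argument breaks for the larger class $\mathcal{G}$ is a nice observation but not needed for the lemma itself.
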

\begin{proof}
	By definition, every vertex from $X$ has two neighbors in $ D $. Thus, $D$ is a $ 2 $-dominating set in $G$.
	Suppose, to the contrary that, $ D' $ is a $ 2 $-dominating set of $ G $ such that $ |D'|<|D| $. Let $ D_1 = D \cap D'$ and $ D_2 = D \setminus D' $. Since $ D $ is independent in $ G $, the vertices in $ D_2 $ have to be $ 2 $-dominated by the vertices of $ D' \setminus D $, that is, every vertex in $ D_2 $ has at least two neighbors in $ D' $. Then we have \[|E[D',D_2]|\geq 2|D_2|.\]
	Moreover, by the definition of $\mathcal{G}_1(F) $, every vertex in $ D'\setminus D $ has exactly two neighbors in $ D $, so we have \[2|D'\setminus D|\geq |E[D',D_2]|.\]
	Thus, $ |D'\setminus D| \geq |D_2| $.
	Since $ D'=(D' \setminus D) \cup D_1 $, we conclude $ |D'| \geq |D_2|+|D_1|=|D| $, a contradiction.
\end{proof}
%

\begin{thm} \label{thm:2}
	Let $G^D$ be a graph from $\cH$. Then $\gamma(G)=\gamma_2(G)$ holds if and only if $G^D$ contains no subgraph isomorphic to $B^W$ and no subgraph isomorphic to $A_k^{W}$ for any $k\ge 2$.
\end{thm}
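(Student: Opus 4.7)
The plan is to invoke Lemma~\ref{lem:d2dom}, which yields $\gamma_2(G)=|D|$ since $G^D\in\cH\subseteq\cG_2\subseteq\cG_1$, thus reducing the theorem to the equivalent statement: $\gamma(G)<|D|$ if and only if $G^D$ contains some $A_k^W$ or $B^W$ as a subgraph.

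For the ``only if'' direction (contrapositive: a forbidden subgraph implies $\gamma(G)<|D|$), I would exhibit an explicit dominating set of size $|D|-1$. If $A_k^W\subseteq G^D$, the set $(D\setminus\{v,w_1,\dots,w_k\})\cup\{x_1^2,\dots,x_k^2\}$ dominates $G$: each $x_i^2$ dominates both $v$ and $w_i$, each $x_i^1$ is dominated by $x_{i+1}^2$ through the cyclic supplementary edge, and no additional subdivision pair causes trouble because an edge $w_iw_j$ in $F$ would form a triangle $vw_iw_j$, contradicting $F$ being $(C_3,C_4)$-free. If $B^W\subseteq G^D$, a case analysis on the (at most one, again by $(C_3,C_4)$-freeness) potential extra edge of $F$ inside $\{v_1,u_1,v_2,u_2\}$ shows that either $(D\setminus\{u_1,v_2,u_2\})\cup\{x_1^1,x_2^2\}$ or $(D\setminus\{v_1,v_2,u_2\})\cup\{x_1^1,x_2^2\}$ dominates $G$: the retained $W$-vertex ($v_1$ or $u_1$, chosen as an endpoint of the extra edge) handles the partner subdivision as well as any subdivision pair produced by the extra edge, while $x_2^1$ is always dominated by $x_1^1$ via the supplementary edge $x_1^1x_2^1$.

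For the ``if'' direction, let $T$ be a minimum dominating set with $|T|<|D|$, and set $D_1=T\cap D$, $D_2=D\setminus T$, $X_T=T\cap X$. Since $D$ is independent in $G$, each vertex of $D_2$ is dominated by $X_T$. Combining $|X_T|<|D_2|$ with $|N_G(x)\cap D|=2$ for every $x\in X$, a pigeonhole count yields a ``saved edge'' $e_1=v_iv_j\in E(F)$ with both $v_i,v_j\in D_2$ and $|X_T\cap X_{e_1}|=1$; say $X_T\cap X_{e_1}=\{x_1\}$, and let $y_1\in X_{e_1}\setminus\{x_1\}$. Since $y_1\notin T$ and $v_i,v_j\notin T$, the vertex $y_1$ must have a supplementary neighbor $z_1\in X_T$ in another pair $X_{e_2}$ (with $e_2\neq e_1$). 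Iterating this ``chase'' through saved edges and supplementary links, and using $(C_3,C_4)$-freeness of $F$ to guarantee tree-like behavior, I would argue the chase must terminate in one of two ways: by closing into a cycle of saved edges meeting at a common $D$-vertex $v$, exhibiting $A_k^W$; or by arriving at a pair $X_{e_2}$ that is not itself a saved edge (e.g.\ $|X_T\cap X_{e_2}|=2$) with $e_2$ vertex-disjoint from $e_1$ in $F$, exhibiting $B^W$.

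The main obstacle is the ``if'' direction, where one must show the chase converges to exactly the prescribed patterns. Delicate points include: (a) verifying, via minimality of $T$ and the $(C_3,C_4)$-free hypothesis, that the chain of saved edges, when it meets a common vertex of $F$, produces the clean star-plus-cyclic-supplementary-edges structure of $A_k^W$ rather than a degenerate shape; and (b) when the chain terminates non-cyclically, certifying vertex-disjointness of $e_1$ and $e_2$ in $F$ so that we recover a genuine $B^W$. A matching-theoretic reformulation in an auxiliary graph---consistent with the paper's subsequent use of Edmonds' Blossom Algorithm---is likely the cleanest route.
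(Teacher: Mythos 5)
Your ``only if'' direction is essentially the paper's argument (with $\{x_1^2,\dots,x_k^2\}$ in place of $\{x_1^1,\dots,x_k^1\}$ for $A_k^W$, and the leaf-of-a-forest observation for $B^W$ phrased as ``retain an endpoint of the extra edge''), and it is correct. The ``if'' direction, however, has genuine gaps and is not a proof. First, your pigeonhole only yields an edge $e_1=v_iv_j$ with $v_i,v_j\in D_2$ and $|X_T\cap X_{e_1}|\ge 1$, not $=1$: you must first rule out that a pair $X_{i,j}$ is wholly contained in $T$. That step is not free; the paper does it by choosing the minimum dominating set to maximize $|T\cap D|$ and exchanging $\{x_{i,j}^1,x_{i,j}^2\}$ for $\{v_i,v_j\}$, which in turn uses $N[x_{i,j}^s]\subseteq N[v_i]\cup N[v_j]$ --- a consequence of $B^W$-freeness, so the two forbidden structures interact here and you cannot treat the $A_k^W$ hunt independently of the $B^W$ hypothesis.

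Second, and more seriously, your chase does not converge to $A_k^W$. If $B^W$ is excluded, consecutive pairs along the chase share a $D$-vertex, but a closed chase only forces the corresponding $F$-edges to form a closed walk in $F$; since $F$ may contain cycles of length $\ge 5$, this walk need not be a star centered at a single vertex $v$, which is exactly what $A_k^W$ requires (all its subdivision pairs lie on edges $vw_1,\dots,vw_k$ incident to one vertex). You flag this as delicate point (a) but offer no mechanism to resolve it, and ``tree-like behavior from $(C_3,C_4)$-freeness'' is not enough. The paper closes this gap with a two-stage structural reduction that is the technical core of the proof: delete all pairs $X_{i,j}$ disjoint from $T$, pass to a component $G''$ on which the count $|E(F'')|\le|T\cap V(G'')|<|V(F'')|$ forces the underlying graph $F''$ to be a tree with $T\cap D\cap V(G'')=\emptyset$, and then iteratively delete non-pendant edges of $F''$ (checking each time that the restricted set still dominates) until the underlying graph is a star $K_{1,m}$. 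Only then does the out-degree/directed-cycle argument on the pairs produce a genuine $A_t^W$. Without an equivalent of this reduction your argument does not go through; the matching-theoretic reformulation you gesture at is what powers the recognition algorithm of Section~\ref{sec:3}, but it presupposes the characterization rather than proving it.
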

\begin{proof}
	Throughout the proof, we assume that $G\in \cH$ and hence there exists a $(C_3,C_4)$-free underlying graph $F$ such that $G \in \cG_2(F)$. By Lemma~\ref{lem:d2dom}, $D=V(F)$ is a minimum $ 2 $-dominating set of $G$.
	
	First assume that $G^D$ contains a (not necessarily induced) subgraph which is isomorphic to $B^W$. We may assume, without loss of generality, that this subgraph contains the vertices $ S = \{v_1, u_1, v_2, u_2, x_1^1,x_1^2,x_2^1,x_2^2\} $, the edges correspond  to those in Fig.~\ref{fig:C1C2}, and $ S \cap D =\{v_1, u_1, v_2, u_2\} $. Since $F$ is $ \{C_3,C_4\}$-free, the induced subgraph $F[S \cap D]$ is $ \{C_3,C_4\}$-free as well. Therefore, as $|S\cap D|=4$, $F[S \cap D]$ is a forest. It contains at least two edges, namely $v_1u_1$ and $v_2u_2$. Hence,  $F[S \cap D]$ contains a leaf, say $ v_1 $. Consider the set $ D' = (D \setminus S) \cup \{u_1, x_1^1, x_2^2\} $.  Observe that $ D' $ dominates all the vertices in $D$; the vertex $x_1^1 \in D'$ dominates $ x_2^1 $; the vertex $ u_1 $ dominates $ x_1^2 $. By the choice of $ v_1 $ and $u_1$, $ F[\{v_1,v_2,u_2\}] $ contains only the edge $ v_2u_2 $. Hence, all the subdivision vertices different from $ \{x_1^1,x_1^2,x_2^1,x_2^2\} $ are dominated either by $ D \setminus S $ or $ u_1 $. Hence, $D'$ is a dominating set in $G$ and $|D'|<|D|$. These imply $\gamma(G)<\gamma_2(G)$.
	
	Next assume that $G^D$ contains a subgraph which is isomorphic to $A_k^W$. We may assume, without loss of generality, that the vertices of this subgraph are named as given in the definition of $ A_k^W $. Consider the set $D'=(D\setminus W) \cup \{x_1^1, \dots x_k^1\}$.  Observe that $D'$ dominates all the vertices in $D$; the set $\{x_1^1, \dots x_k^1\} \subseteq D'$ dominates all the vertices of the form $x_i^s$ ($i \in [k]$, $s\in [2]$). Since $F$ is assumed to be $C_3$-free, for any further subdivision vertex $x_{i,j}^s$ of $G$, at least one of its neighbors which is a $D$-vertex, namely at least one of $v_i$ and $v_j$, is not included in $W$. Thus, $x_{i,j}^s$ is dominated by a vertex in $D\setminus W$. We may conclude that $D'$ is a dominating set in $G$. Since $|W|=k+1$, we have $|D'|<|D|$ from which $\gamma(G)<\gamma_2(G)$ follows. This finishes the proof of one direction of our theorem.
	
	For the converse, we assume that $G$ 	contains no subgraph isomorphic to $B^W$ and no subgraph isomorphic to $A_k^{W}$ for any $k\ge 2$, and then prove that $\gamma(G)=\gamma_2(G)$. In particular, having no subgraph isomorphic to $B^W$ means that every supplementary edge is inside a neighborhood of a $D$-vertex and, therefore, $N[x_{i,j}^s]\subseteq N[v_i]\cup N[v_j]$ holds for each supplementary vertex $x_{i,j}^s$. Now, suppose for a contradiction that $\gamma(G)<\gamma_2(G)$. Let $D'$ be a minimum dominating set of $G$ such that $|D'\cap D|$ is maximum under this condition. It is clear that  $|D'|=\gamma(G)<\gamma_2(G)=|D|$.
	
	We first prove that no pair $x_{i,j}^1$, $x_{i,j}^2$ are contained together in $D'$. Suppose, to the contrary, that $\{x_{i,j}^1$, $x_{i,j}^2 \}\subseteq D'$. Then, since $N[x_{i,j}^1]\cup N[x_{i,j}^2]\subseteq N[v_i]\cup N[v_j]$,
	the set $D''= (D'\setminus \{x_{i,j}^1, x_{i,j}^2\}) \cup \{v_i,v_j\}$ would be a dominating set of $ G $. This contradicts either the minimality of $ |D'| $ or the maximality of $|D'\cap D|$. 
	
	If we have some edges $v_iv_j \in E(F)$ such that $|X_{i,j}\cap D'|=0$, then we delete all these $X_{i,j}$ pairs  from $G$, delete all the associated edges from $F$ and obtain $G'$ and $F'$. Note that, by definition, $G'\in \cG_2(F')$ and $F'$ is still $(C_3,C_4)$-free. As $D'$ contains exactly one vertex from each remaining pair $X_{i,j}$, we infer that $|E(F')| \le |D'|$. By Lemma~\ref{lem:d2dom}, $\gamma_2(G')$ remains $|D|$ (we did not delete the possibly arising isolated vertices). We deleted only subdivision vertices not contained in  $D\cup D'$ and $D'$ contains exactly one vertex from each pair $X_{i,j}$ corresponding to an edge $v_iv_j \in E(F')$. Therefore,
	\begin{equation} \label{eq:1}
	 |E(F')| \le |D' \cap V(G')|< |D\cap V(G')|
	\end{equation}
	holds and $D' \cap V(G')$ is a dominating set in $G'$. By Lemma~\ref{lem:d2dom}, $D\cap V(G')$ remains a minimum $2$-dominating set in $G'$.

	$G'$ might contain several components. By the inequality (\ref{eq:1}), there is a component, say $G''$, such that $|D'\cap V(G'')| <|D \cap V(G'')|=\gamma_2(G'')$. It is clear that $G''$ is not an isolated vertex.
	Recall that $N_G[x_{i,j}^s] \subseteq N_G[v_i] \cup N_G[v_j]$ holds for each supplementary vertex $x_{i,j}^s$ in $G$ and hence, by construction, the analogous statement remains true in $G''$. Thus, the connectivity of the underlying graph $F''$ of $G''$ follows from the connectivity of $G''$. It also holds that $V(F'')= D \cap V(G'')$. Moreover, as $D' \cap V(G'')$ intersects each pair $X_{i,j} $ from $G''$, we have $|E(F'')| \le |D' \cap V(G'')|$. We may conclude 
\begin{equation} \label{eq:2}
|E(F'')| \le |D' \cap V(G'')| < |V(F'') |.
\end{equation}
The underlying graph $F''$ is therefore a tree and 
\begin{equation} \label{eq:3}
|E(F'')|= |D' \cap V(G'')| = |V(F'') |-1
\end{equation}
holds. By the first equality in (\ref{eq:3}), $D' \cap D \cap V(G'')=\emptyset$.  Note that $F''$ is not necessarily an induced subgraph of $F$ but, as $F$ is $C_3$-free, all the star-subgraphs of $F''$ are induced stars in $F$.

	Consider a non-pendant edge $ v_iv_j $ in $ F'' $ (if there exists). We know that $ D'\cap V(G'') $ is a dominating set in $G''$ and it contains exactly one vertex from $ X_{ij}$. Renaming the vertices if necessary, we may suppose $  x_{i,j}^1 \in D' $. Then the vertex $ x_{i,j}^2 $ must be dominated by a vertex from $ D' $, which is a neighbor of either $ v_i $ or $ v_j $. Without loss of generality, assume that $ x_{i,j}^2 $ is dominated by a neighbor of $ v_i $. Let $ S = V(G'')\setminus (N_{G''}(v_j) \setminus X_{ij}) $ and consider the induced subgraph $ G''[S] $. Let $ H $ be the component of the resulting graph, which contains both $ v_i $ and $ v_j $. 
	
Recall that $D' \cap V(G'')$ dominates all vertices in $G''$. By construction, $N_{G''}[v_p] \subseteq V(H)$ is true for every vertex $v_p\neq v_j$ from $ D' \cap V(H)$ and  
		$$N_{G''}[x_{p,q}^s] \subseteq N_{G''}[v_p] \cup N_{G''}[v_q] \subseteq V(H)$$
		 holds for every $x_{p,q}^s \in X \cap V(H)$ if $p \neq j \neq q$.  The set $D' \cap V(H)$ therefore dominates all vertices from $V(H) \setminus N_H[v_j]$. As $N_H[v_j]=\{v_j, x_{i,j}^1, x_{i,j}^2 \}$, it can be readily seen that $D'\cap V(H)$ is a dominating set in $H$.
		
		 Repeate sequentially this procedure of deleting non-pendant edges in the underlying graph. At the end we obtain a graph $ H_r $ with an underlying graph $ F_r $ such that $F_r$ is isomorphic to a star graph $ K_{1,m} $. Then the set $ D_r = V(H_r) \cap D' $ is a dominating set of $ H_r $ and it contains exactly one vertex from each pair $ X_{i,j} $ of subdivision vertices. 
	
	We will construct a directed graph $ R $ as follows. We create a vertex $ x_{i,j} $ corresponding to each pair $ X_{i,j} \subset V(H_r) $ of subdivision vertices. Then, we add a directed edge from $ x_{i,j} $ to $ x_{k,\ell} $ in  $R $, if the vertex in $ X_{i,j}\setminus D_r $ is dominated by the vertex in $ X_{k,\ell}\cap D_r $. As $ D_r $ has exactly one vertex from each pair $ X_{i,j} $, the outdegree of each vertex $ x_{i,j} \in V(R) $ is at least one. Thus, there is a directed cycle of order at least $ t \ge 2 $, which corresponds  to a subgraph isomorphic to $ A_t^W $ in $ H_r^{D \cap V(H_r)} \subseteq G^D $. This contradicts our assumption and finishes the proof of the theorem.
\end{proof}

\section{Algorithmic complexity}  \label{sec:3}

Since there are infinitely many forbidden subgraphs, Theorem \ref{thm:2} does not give directly a polynomial time recognition algorithm for $(\gamma,\gamma_2)$-graphs on $\cH$.  However, based on this characterization, we can design a polynomial time algorithm to check whether $ \gamma(G) = \gamma_2(G) $ holds for a general instance $ G^D \in \mathcal{H} $. 
\begin{thm}
	\label{thm:complexity}
	Let $ G^D \in \cH$ be given. It can be decided in polynomial time whether the graph $ G^D $ satisfies the equality $ \gamma(G) = \gamma_2(G) $.
\end{thm}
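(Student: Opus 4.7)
The plan is to invoke Theorem~\ref{thm:2} and reduce the problem to testing for two kinds of forbidden subgraphs: $B^W$ and $A_k^W$ for $k\ge 2$. Checking for $B^W$ is immediate in polynomial time: for every supplementary edge $xy$ of $G$ I locate the pairs $X_{i,j}\ni x$ and $X_{k,\ell}\ni y$, and declare $B^W$ present precisely when $\{v_i,v_j\}\cap\{v_k,v_\ell\}=\emptyset$. The substantive task is to test for the infinite family $\{A_k^W\}_{k\ge 2}$ in polynomial time.

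The key observation is that any $A_k^W$-subgraph of $G^D$ has a natural \emph{center} $v\in D$ (the image of the vertex labelled $v$ in the definition of $A_k$), and all of its $2k$ subdivision vertices lie in the pairs $X_{v,w}$ with $w\in N_F(v)$. I would therefore iterate over the choices of center $v\in D$ and, for each such $v$, construct an auxiliary graph $\widetilde H_v$ on vertex set $S_v=\bigcup_{w\in N_F(v)} X_{v,w}$ whose edges are of two kinds: the virtual \emph{pair edges} joining the two subdivision vertices within each $X_{v,w}$, plus every supplementary edge of $G$ both of whose endpoints lie in $S_v$. The pair edges form a perfect matching $M_v$ of $\widetilde H_v$.

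The reduction I would prove is: $G^D$ contains an $A_k^W$-subgraph centered at $v$ for some $k\ge 2$ if and only if $\widetilde H_v$ admits an $M_v$-alternating cycle. Indeed, the defining cycle $x_1^1 x_2^2, x_2^1 x_3^2, \ldots, x_k^1 x_1^2$ inside $A_k$ is exactly an alternation between supplementary edges and pair edges, while the edges from the subdivision vertices to $v$ and to the $w_i$'s are automatically present in $G$ by the definition of $\cG_2$. By standard matching theory, $\widetilde H_v$ admits an $M_v$-alternating cycle if and only if $M_v$ is not the unique perfect matching of $\widetilde H_v$ (since the symmetric difference of two distinct perfect matchings decomposes into alternating cycles). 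To test non-uniqueness, I would run Edmonds' blossom algorithm on $\widetilde H_v-\{u,u'\}$ for each supplementary edge $e=uu'$ of $\widetilde H_v$: a perfect matching in any such graph extends with $e$ to a second perfect matching of $\widetilde H_v$. This amounts to $O(|D|\cdot|E(G)|)$ blossom calls overall, each polynomial-time, yielding the desired polynomial-time algorithm.

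The main obstacle is verifying the correspondence between $A_k^W$-subgraphs centered at $v$ and $M_v$-alternating cycles precisely: one needs to check that an alternating cycle of length $2k\ge 4$ uses $k$ pairwise distinct pair edges, and hence yields $k$ distinct neighbours $w_i$ of $v$ so that all $3k+1$ labels of $A_k^W$ map to distinct vertices of $G$; and conversely that the subdivision vertices and supplementary edges of any $A_k^W$-subgraph assemble into a genuine alternating cycle in $\widetilde H_v$. Once this correspondence is settled, reducing alternating-cycle detection to non-uniqueness of perfect matching, and then to blossom-algorithm calls, is routine.
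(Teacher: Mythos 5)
Your proposal is correct and follows essentially the same route as the paper: reduce via Theorem~\ref{thm:2} to forbidden-subgraph testing, detect $B^W$ by checking whether some supplementary edge has endpoints with disjoint $D$-neighborhoods, and detect the family $\{A_k^W\}$ by fixing a center $v_i\in D$, adding the virtual pair edges $x_{i,j}^1x_{i,j}^2$ to $G[N_G(v_i)]$ to form a canonical perfect matching, and testing via Edmonds' blossom algorithm whether that matching is unique. The only (immaterial) difference is how non-uniqueness is tested: the paper deletes each pair edge in turn and looks for a perfect matching of the remaining edge set, while you delete the two endpoints of each supplementary edge and look for a perfect matching of the rest; both are correct and run in polynomial time.
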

\begin{proof} 
	By Theorem \ref{thm:2},  $\gamma(G)=\gamma_2(G)$ holds if and only if $G^D$ contains no subgraph isomorphic to $B^W$ and no subgraph isomorphic to $A_k^{W}$ for any $k\ge 2$.
	
	\medskip
	\noindent	\hrulefill\\
	\noindent \textbf{Algorithm}
	
	\noindent	\hrulefill\\
	\indent \textit{Input:} A graph $ G^D \in \cH$ \\
	\indent \textit{Output:} If $ \gamma(G) = \gamma_2(G) $, then true; else false. \\
	\indent \hspace{1cm} for each supplementary edge $ uv $ in $ G $ \\
	\indent \hspace{2cm}if $D \cap (N_G(u) \cap N_G(v)) = \emptyset $, then return false\\
	\indent \hspace{1cm} for each vertex $ x $ in $ D $ \\
	\indent \hspace{2cm}  $ X \leftarrow N_G(x)$ and $ G'\leftarrow G[X] $\\
	\indent \hspace{2cm} $ k= (\deg_{G}x) /2$ \\
	\indent \hspace{2cm} for $ i \leftarrow 1 $ to $ k $ do \\
	\indent \hspace{3cm} $ E \leftarrow E(G') $\\
	\indent \hspace{3cm} for $ j \leftarrow 1 $ to $ k $ do \\
	\indent \hspace{4cm} if $ j\ne i $, then $ E \leftarrow E \cup \{x_j^1 x_j^2\}$   \\
	\indent \hspace{3cm} $ \mu $ $ \leftarrow  $ the order of the maximum matching  in $ E $\\
	\indent \hspace{3cm} if $ \mu = k $, then return false\\
	\indent \hspace{2cm}end-for\\
	\indent \hspace{1cm} end-for\\
	\indent \hspace{1cm} return true\\
	\indent end.
	
	\noindent	\hrulefill
	
	\medskip
	
	The  algorithm above, first, determines whether $B^W \subseteq G^D $. If it holds, then the algorithm halts. It can be readily checked that this part of the algorithm requires polynomial time.

	\noindent 	Then, in the next steps of the algorithm, the existence of subgraphs isomorphic to $A_\ell^{W}$ is tested. In order to find such a subgraph (if it  exists), the algorithm searches for an appropriate matching in $G[N_G(v_i)]$ for every vertex $ v_i$ from $ D $. Since a subgraph $A_\ell^W$ does not necessarily contain all the neighbors of  $ v_i $, it is not enough to check the existence of a perfect matching in $ G[N_G(v_i)] $. Instead, we define the edge set $E_i= \{x_{i,j}^1x_{i,j}^2: v_j \in N_F(v_i)\}$. Let $ G_i^* $ be the graph $ G[N_G(v_i)] $ extended by the edges from $ E_i $. Clearly, $G_i^* $ contains a perfect matching which is $ E_i $. On the other hand, $ G_i^* $ contains a perfect matching different from $ E_i $ if and only if $ G[N_G(v_i)] $ has a subgraph isomorphic to $ A_\ell^W $. Hence, the algorithm checks all possible $G_i^*-e$ graphs, where $e\in E_i$,  and if any of them has a perfect matching, then there exists a subgraph isomorphic to $ A_\ell^W $.

	In order to find a maximum matching in $G_i^*-e$, we can use Edmond's Blossom Algorithm \cite{Edmonds1965}, which was improved by Micali and Vazirani in \cite{Micali1980} to run in time $ O(\sqrt{n}m) $ for any graph of order $n $ and size $ m $. The procedure will be repeated $ (\deg_G(x) /2) = \deg_F(x)$ times for every vertex $ x\in D $, that is, $ \Sigma_{v\in V(F)}\deg(v) = 2|E(F)| $, in total. Thus, the second part of the algorithm requires polynomial-time. This finishes the proof.	
\end{proof}
\medskip
\noindent 	We now show that the same problem is NP-hard even on the graph class $ \mathcal{G}^D_1 $.

\begin{thm} \label{thm:4}
	Consider every graph $ G \in  \mathcal{G}_1 $ together with a specified set $ D $ such that $ G^2[D] \cong F $ and $ G \in \mathcal{G}_1(F) $. Then, it is NP-complete to decide whether the inequality $ \gamma(G) < \gamma_2(G)$ holds for a general instance $ G \in \mathcal{G}_1 $.
\end{thm}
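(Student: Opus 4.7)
First, I would settle membership in NP. Under the hypothesis $G \in \cG_1(F)$ with specified $D$, Lemma \ref{lem:d2dom} yields $\gamma_2(G) = |D|$, so the predicate $\gamma(G) < \gamma_2(G)$ simply asks whether $G$ admits a dominating set of cardinality at most $|D|-1$. Any such set is a polynomial-size certificate whose correctness (both domination and cardinality) can be checked in polynomial time.

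For NP-hardness, I would give a polynomial-time many-one reduction from a standard NP-complete problem, with 3-SAT (in a bounded-occurrence form), EXACT COVER BY 3-SETS, or DOMINATING SET on a suitable restricted class as natural targets. Given an instance $I$ of the source problem, the reduction should output an underlying graph $F$, a partition $V(G) = D \cup X \cup Y$ with $D = V(F)$, and a choice of supplementary edges inside $X \cup Y$, so that the resulting $G$ lies in $\cG_1(F)$ and satisfies $\gamma(G) < |D|$ exactly when $I$ is a YES-instance. The intended gadgets would encode each local choice of $I$ (e.g.\ the truth value of a variable, or the inclusion of a triple in the cover) by means of supplementary vertices in $Y$, whose two $D$-neighbors serve as the two alternatives of the choice and therefore must be adjacent in $F$. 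A single such vertex $y\in Y$ used in a dominating set $D'$ can replace both of its $D$-neighbors, so each use of a supplementary vertex shaves exactly one element off the baseline dominating set $D$. The reduction would then arrange the gadgets so that the realizable combinations of such savings correspond precisely to the combinations of local choices that assemble into a valid global solution of $I$.

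The main technical obstacle is ruling out unintended dominating sets. The supplementary edges inside $X \cup Y$ may be chosen arbitrarily (modulo the condition that each $X_{i,j}$ remains independent), so in principle a dominating set of $G$ could exploit these edges to dominate subdivision or supplementary vertices via routes not foreseen by the gadget design. A standard remedy I would employ is to attach consistency gadgets --- for instance by replicating each variable/choice gadget several times, or by adding many parallel supplementary vertices sharing the same $D$-neighborhood --- so that any deviation from the intended format incurs a strictly positive net cost. Verifying that this suffices, namely that every dominating set of $G$ of cardinality below $|D|$ must in fact arise from a legitimate solution of $I$, is the delicate core of the argument and is where the bulk of the case analysis will live.
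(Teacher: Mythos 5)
Your NP-membership argument is exactly the paper's: Lemma~\ref{lem:d2dom} pins $\gamma_2(G)=|D|$, and a dominating set of size at most $|D|-1$ is a checkable certificate. That half is fine.

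The hardness half, however, is a plan rather than a proof, and the plan stops precisely at the point where the theorem's content lies. You name candidate source problems (3-SAT, X3C, dominating set), describe in general terms that gadgets "would encode each local choice" via supplementary vertices, and then state that verifying the absence of unintended dominating sets "is the delicate core of the argument and is where the bulk of the case analysis will live." No concrete underlying graph $F$, no concrete $G$, and no proof of either direction of the equivalence is given, so there is nothing to check. There is also a substantive issue with the one mechanism you do commit to: you claim that a supplementary vertex $y\in Y$ placed in $D'$ "can replace both of its $D$-neighbors," shaving one vertex per use. But replacing $\{v_i,v_j\}$ by $y$ loses domination of every other vertex of $X\cup Y$ attached to $v_i$ or $v_j$ --- in particular the pair $X_{i,j}$ itself, to which $y$ need not be adjacent --- so this is not a self-contained savings move, and a design in which several such swaps are supposed to compose is exactly the part that needs to be exhibited and verified.

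For comparison, the paper reduces from 3-SAT with a single global gadget rather than composable local ones: $F$ is the star $S_{k+1}$ with center $v_0$; the literal vertices $x_i^t,x_i^f$ form the subdivision pair on the edge $v_0v_i$; the clause vertices $c_j$ and an auxiliary vertex $c^*$ all have $D$-neighborhood $\{v_0,v_{k+1}\}$; and the clause structure is encoded entirely by supplementary edges from $c_j$ and $c^*$ to the literal vertices. Then $\gamma_2(G)=k+2$, a satisfying assignment yields the dominating set $\{c^*\}\cup\{\text{true literals}\}$ of size $k+1$, and conversely a dominating set of size $k+1$ is forced (by the $k+1$ pairwise disjoint closed neighborhoods $N[v_1],\dots,N[v_{k+1}]$, plus a short case analysis on whether $v_{k+1}$, some $c_j$, or $c^*$ is chosen, using a harmless preprocessing assumption on the 3-SAT instance) to contain $c^*$ and exactly one literal vertex per variable, which reads off a satisfying assignment. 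Until you supply a construction and equivalence proof at this level of explicitness, the hardness direction remains unproved.
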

\begin{proof}
	By Lemma \ref{lem:d2dom}, we have $ \gamma_2(G) = |D| $ and it can be checked in polynomial time whether a given set $ D' $ with $ |D'| < |D| $ is a dominating set of $ G $. Thus, the decision problem belongs to NP.

	In order to prove the NP-hardness, we present a polynomial-time reduction from the well-known $ 3 $-SAT problem, which is proved to be NP-complete \cite{Garey1979}.  
	
	Let $ X = \{x_1, x_2, \dots, x_k\}$ be a set of Boolean variables. A truth assignment for $ X $ is a function $ \varphi:X \rightarrow \{t,f\} $. If $ \varphi(x_i)=t $ holds, then the variable $ x_i $ is called $ true $; else if $ \varphi(x_i)=f$ holds, then $ x_i $ is called $ false $. If $ x_i $ is a variable in $ X $, then $ x_i $ and $ \neg{x_i} $ are literals over $ X $. The literal $ x_i $ is true under $ \varphi $ if and only if the variable $ x_i $ is true under $ \varphi $; the literal $ \neg x_i $ is true if and only if the variable $ x_i $ is false. A clause over $ X $ is a set of three literals over $ X $, represents the disjunction of those literals and it is satisfied by a truth assignment if and only if at least one of its members is true under that assignment. A collection $ \mathcal{C} $ of clauses over $ X $ is \textit{satisfiable} if and only if there exists some truth assignment for $ X $ that satisfies all the clauses in $ \mathcal{C} $. Such a truth assignment is called a \textit{satisfying truth assignment} for $ \mathcal{C} $. The $ 3 $-SAT problem is specified as follows.
	
	\medskip
	\medskip
	\noindent \textbf{3-SATISFIABILITY (3-SAT) PROBLEM}
	
\vspace{0.25cm}

\noindent\textbf{\textit{Instance:}} A collection $  \mathcal{C}  = \{C_1,C_2, \dots ,C_\ell\} $ of clauses over a finite set $ X $ of variables such that $ |\mathcal{C}_j| = 3 $, for $1 \le j \le \ell\} $.
\vspace{0.25cm}

\noindent\textbf{\textit{Question:}} Is there a truth assignment for $ X $ that satisfies all the clauses in $ \mathcal{C} $?
	
\vspace{0.5cm}
	Let $ \mathcal{C} $ be a $ 3 $-SAT instance with clauses $ C_1,C_2,\dots, C_\ell $ over the Boolean variables $ X=\{x_1,x_2, \dots, x_k\} $. We may assume that for every three variables $ x_{i_1}, x_{i_2}, x_{i_3} $ there exists a clause $ C_j $, where $ j\in [\ell] $, such that $ C_j $ does not contain any of the variables $ x_{i_1}, x_{i_2}, x_{i_3} $ (neither in positive form, nor in negative form). Otherwise, the problem could be reduced to at most eight (separated) $ 2$-SAT problems, which are solvable in polynomial time.
	
	We now construct a graph $ G \in \mathcal{G}_1(F) $, where $ F \cong S_{k+1} $, such that the given instance $ \mathcal{C} $ of $ 3 $-SAT problem is satisfiable if and only if $ \gamma(G) < \gamma_2(G)$. 
	The construction is as follows. 
	
	For every variable $ x_i $, we create three vertices $ \{x_i^t, x_i^f, v_i\} $ and then we add the edges $ x_i^tv_i $ and $ x_i^fv_i $. For every clause $ C_j \in \mathcal{C}$, we create a vertex $ c_j $, and if $ x_i $ is a literal in $ C_j $, then $ x_i^tc_j \in E(G) $; if $ \neg x_i $ is a literal in $ C_j $, then    $ x_i^fc_j \in E(G) $. Moreover, we add a vertex $ c^* $ and the edges $ c^*x_i^t  $ and $  c^*x_i^f $ for every $ i\in [k] $. We also add a vertex $ v_{k+1} $ and the edge set $  \{c_iv_{k+1}:1\le i\le\ell\} \cup \{c^*v_{k+1}\}$.  Finally, we add a new vertex $ v_0 $, which is adjacent to every vertex in $ V(G)\setminus \{v_1,v_2\dots,v_{k+1}\} $ (for an illustration of the construction see Fig.~\ref{fig:construction}). The order of $ G $ is obviously $ 3k+\ell +3 $ and this construction can be done in polynomial time. Note that $ G \in \mathcal{G}_1(F) $, where $ F $ is a star with center $ v_0 $ and leaves $ v_1,\dots, v_{k+1} $. Thus, we have $ \gamma_2(G)=k+2 $, by Lemma \ref{lem:d2dom}. 
	
	\begin{figure}[h]
		\centerline{\includegraphics[width=0.8\textwidth]{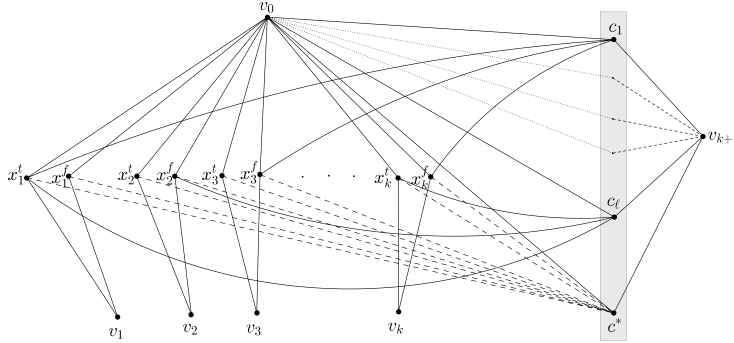}} 	
		\captionsetup{width=0.8\textwidth}
		\centering
		\caption{\protect An illustration of the construction for $ 3 $-SAT reduction: The clauses $ C_1 $ and $ C_\ell $ corresponding to the vertices $ c_1 $ and $ c_\ell $, resp., are $ C_1=(x_1 \vee \neg x_3 \vee \neg x_k) $ and  $ C_\ell=(x_1 \vee \neg x_2 \vee x_k) $.}
		\label{fig:construction}
	\end{figure}

	We now prove that $ \mathcal{C} $ is satisfiable if and only if $ \gamma(G) < \gamma_2(G) $. First, consider a truth assignment $ \varphi : x_i \rightarrow \{t,f\} $ which satisfies $ \mathcal{C} $. Let $D_1=\bigcup_{i\in [k] }\{x_i^t :\varphi(x_i)=t\} $ and  let $D_2 = \bigcup_{i\in [k] }\{x_i^f : \varphi(x_i)=f\}$. Consider the set $ D' = D_1 \cup D_2 \cup \{c^*\} $. It can be readily checked that $ D' $ is a dominating set of cardinality $ k+1 $. Hence, $ \gamma(G)< \gamma_2(G)$ follows.
	
	Conversely, assume that $ \gamma(G) < \gamma_2(G) $ and consider a minimum dominating set $ D' $ of cardinality at most $ k+1 $. In order to dominate $ v_i $, the set $ D' $ contains at least one vertex from the set $ \{x_i^t, x_i^f, v_i\} $, for each $ i\in [k] $. Similarly, to dominate $ v_{k+1} $, the set $ D' $ contains at least one vertex from the set $ \{c_1,c_2,\dots,c_\ell,c^*,v_{k+1}\}$. Since $ |D'|\le k+1 $, we have $|D' \cap \{x_i^t, x_i^f, v_i\}| = 1 $ for every $ i\in [k] $. Moreover, $ |D'\cap \{c_1,c_2,\dots,c_\ell,c^*,v_{k+1}\}| = 1$ and $ v_0 \notin D' $.
	
	Suppose that $ v_{k+1} \in D'$. In order to dominate the vertices $ x_i^t $  and  $ x_i^f $, the set $ D' $ contains the vertex $ v_i $ for all $ i \in [k] $. Hence, $ N_G(v_0) \cap D' = \emptyset $. From the discussion above, we know that $ v_0 \notin D' $. Thus, $ v_0 $ is not dominated by a vertex from $ D' $, a contradiction.
	
	Suppose that $ c_j \in D' $ for some $ j \in [\ell] $. Let $ C_j $ be the corresponding clause containing the variables $x_{i_1},x_{i_2}, x_{i_3} $. Consider any variable $ x_s \in X \setminus \{x_{i_1},x_{i_2}, x_{i_3}\} $. Since $|D' \cap \{x_i^t, x_i^f, v_i\}| = 1 $ for each $ i\in [k] $, $ D' $ contains $ v_s $ in order to dominate both of the vertices $ x_s^t $ and $ x_s^f $. By our assumption, there exists a clause $C_q  $ not containing the variables $x_{i_1},x_{i_2}, x_{i_3} $ neither in positive nor in negative form. Thus, $ c_q $ is not dominated by a vertex from $ D' $, a contradiction.
	
	Since $ |D'\cap \{c_1,c_2,\dots,c_\ell,c^*\}| = 1$, the only remaining case is $ c^*\in D'  $. Under this assumption, every vertex $ c_i $ must be dominated by the vertices corresponding to the literals in $ C_i$. Thus, the truth assignment
	\[ \varphi(x_i) =  \begin{cases} 
	t, & \text{if }x_i^t \in  D'  \\
	f, & \text{if }x_i^f \in D' \text { or  if }v_i \in  D' \\
	
	\end{cases}
	\]
	satisfies $ \mathcal{C} $. This finishes the proof.
\end{proof}

Theorem~\ref{thm:4} implies that it is coNP-complete to decide whether the equality $ \gamma(G)  = \gamma_2(G)$ holds for a general instance $ G $ from $  \mathcal{G}_1$. On the other hand, we cannot prove that the problem belongs to NP. Instead, we will consider the complexity class $\Theta^p_2$, which consists of those problems solvable by a polynomial-time deterministic algorithm using NP-oracle asked for only $O(\log n)$ times. (For a detailed introduction, please, see \cite{Marx2006}.) 

\begin{prop}
	The complexity of deciding whether $\gamma(G)=\gamma_2(G)$ holds for a general instance $G$ is in the class $\Theta^p_2$.			
\end{prop}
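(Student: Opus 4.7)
The plan is to determine the exact values of $\gamma(G)$ and $\gamma_2(G)$ by binary search with an NP-oracle, and then decide the equality by a direct numerical comparison. The first step is to observe that for every fixed integer $k$, the question ``Is $\gamma(G)\le k$?'' is in NP: a dominating set of cardinality at most $k$ serves as a polynomial-size certificate whose validity can be checked in polynomial time. By exactly the same token, ``Is $\gamma_2(G)\le k$?'' is in NP, since a $2$-dominating set of size at most $k$ is a polynomially verifiable witness.

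Next, because both parameters lie in the range $\{1,2,\dots,n\}$, I would determine each of them by binary search that repeatedly queries the NP-oracle with instances of the form above. Each binary search costs $\lceil \log_2 n\rceil$ oracle calls, so the two searches together use $O(\log n)$ calls. Finally, once the two numerical values are in hand, the procedure compares them in constant time and returns \emph{yes} exactly when they coincide. The overall computation is polynomial-time, uses only $O(\log n)$ queries to an NP-oracle, and therefore places the problem in $\Theta^p_2$.

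I do not expect any genuine obstacle in this argument: the only point that requires verification is that both $\gamma$ and $\gamma_2$ admit short, polynomially checkable certificates for their upper-bound decision versions, which is immediate from the definitions. The rest is the standard binary-search reduction of an optimization value in $\{1,\dots,n\}$ to $O(\log n)$ NP-oracle calls.
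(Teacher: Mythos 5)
Your proposal is correct and follows essentially the same argument as the paper: both determine $\gamma(G)$ and $\gamma_2(G)$ by binary search using $O(\log n)$ NP-oracle queries to the threshold problems and then compare the two values. The extra detail you supply about the polynomially verifiable certificates for ``$\gamma(G)\le k$'' and ``$\gamma_2(G)\le k$'' is implicit in the paper's proof but harmless to make explicit.
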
	
\begin{proof}
	Using binary search, the parameters $\gamma(G)$ and $\gamma_2(G)$ can be determined by asking the NP-oracle $O(\log n)$ times whether the inequalities  $\gamma(G) \le k$ and $\gamma_2(G) \le k$ hold. Thus, the decision problem belongs to $\Theta^p_2$.
\end{proof}

Note that in \cite{Arumugam2013}, a similar statement was proved for the problem of deciding whether the transversal number $\tau(\cH)$ equals the domination number $\gamma(\cH)$ for a general instance hypergraph $\cH$.

\section{Characterization of $(\gamma, \gamma_2)$-perfect graphs} \label{sec:4}

Recently, Alvarado, Dantas, Rautenbach \cite{Alvarado2015-2, Alvarado2015} and Henning, J\"ager, Rautenbach \cite{Henning2018} studied graphs for which the equality between two fixed domination-type invariants hereditarily holds. The analogous problem for transversal and domination numbers of graphs and hypergraphs was considered in \cite{Arumugam2013}.

In this section, we characterize $(\gamma, \gamma_2)$-perfect graphs, that is, we characterize the graphs for which the equality between the domination and the $ 2 $-domination numbers hereditarily holds. By Lemma \ref{lem:0}, $ \delta(G) \geq 2$ is a necessary condition for $ \gamma(G)=\gamma_2(G) $. Hence, we define $(\gamma, \gamma_2)$-perfect graphs as follows. 

\begin{definition}
	Let $ G $ be a graph with $ \delta(G) \geq 2 $. Then $ G $ is a $(\gamma, \gamma_2)$-perfect graph if the equality $ \gamma(H)=\gamma_2(H) $ holds for every induced subgraph $ H $ of minimum degree at least two. 
\end{definition}
Note that a disconnected graph $ G $ is $(\gamma, \gamma_2)$-perfect if and only if all of its components are $(\gamma, \gamma_2)$-perfect.

In order to formulate the results of this section we will define the following class.
\begin{definition}
	Let $ S_{k} $ be the star  with center vertex $ v $ and end vertices $ \{v_1,v_2,\dots,\allowbreak v_k\} $ such that $ k\geq 1 $. Denote the edge $ vv_j\in  E(S_{k})$ by $ e_j $ for $ j\in [k] $. Let $ S(i_1,i_2,\dots,i_k) $ be the graph obtained by substituting each edge $ e_j $ of $ S_{k} $ by $ i_j $ parallel edges $ e_j^1,e_j^2,\dots e_j^{i_j} $, where $ i_j \geq 2 $, and then subdividing each edge $ e_j^r $ by adding the vertex $ x_j^r $ for all $ r \in [i_j] $ and all $ j\in [k] $. A graph $ G $ belongs to the class $ \cS $ if it is isomorphic to $ S(i_1,i_2,\dots,i_k) $ for some $ k\geq 1 $, where $ i_j\geq 2 $ for all $ j\in [k] $.
\end{definition}
We clearly have $  \cS \subseteq \cG_1$, since any $S(i_1,i_2,\dots,i_k)  \in \cG_1(F) $, where $ F \cong S_{k} $. On the other hand, if   $G' \in \cG(S_k)$, the underlying graph does not contain a clique of order larger than two and consequently, $|N(y)\cap D|=2$ for every supplementary vertex $y$. This implies that $G'\in \cG_1(S_k)$. By the definitions above, we have the following equivalence.
\begin{prop}
	\label{prop:perfect}
	For any graph, $G\in \cS$ holds if and only if $G\in \cG_1(S_k)$ $ ( $or, equivalently, $G\in \cG(S_k))$ for a non-trivial star $S_k$ and $G$ does not contain a supplementary edge.
\end{prop}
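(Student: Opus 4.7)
The plan is to verify the equivalence by unpacking the three constructions $\cS$, $\cG_1(S_k)$, and $\cG(S_k)$ in parallel. The parenthetical equivalence $\cG(S_k) = \cG_1(S_k)$ is already handled in the paragraph preceding the proposition: since a star is triangle-free, the condition that $N_G(y) \cap D$ induce a clique of size at least two in $S_k$ forces $|N_G(y) \cap D| = 2$ for every supplementary vertex $y$. So the main content is the equivalence between membership in $\cS$ and in $\cG_1(S_k)$ under the additional restriction that no supplementary edges are present.

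For the forward implication I would take an arbitrary $G \cong S(i_1, \ldots, i_k) \in \cS$ and exhibit a partition $V(G) = D \cup X \cup Y$ realizing $G \in \cG_1(S_k)$. The natural choice is to set $F = S_k$ with center $v$ and leaves $v_1, \ldots, v_k$, let $D = \{v, v_1, \ldots, v_k\}$, and for each $j \in [k]$ designate two of the $i_j$ subdivision vertices between $v$ and $v_j$ as the pair $X_j$, relegating the remaining $i_j - 2 \geq 0$ such vertices to $Y$. Each defining condition of $\cG_1(F)$ can then be verified by inspection, and the construction of $S(i_1, \ldots, i_k)$ plainly contains no edges inside $X \cup Y$, so there are no supplementary edges.

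For the converse, suppose $G \in \cG_1(S_k)$ has a partition $D \cup X \cup Y$ with no supplementary edge. Since each $y \in Y$ satisfies $|N_G(y) \cap D| = 2$ and this pair induces an edge of the star, $N_G(y) \cap D$ must equal $\{v, v_j\}$ for a unique $j \in [k]$; the same holds for each subdivision vertex $x_j^s$. Letting $i_j$ denote the number of vertices in $X \cup Y$ whose $D$-neighborhood equals $\{v, v_j\}$, we have $i_j \geq 2$ for every $j$, since the pair $X_j$ alone contributes two. With $G[D]$ edgeless and no supplementary edges allowed, the only edges of $G$ join $D$ to $X \cup Y$ in exactly the incidence pattern describing $S(i_1, \ldots, i_k)$, whence $G \in \cS$. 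I expect no real obstacle here; the only step that deserves care is this reindexing in the converse direction, where the star structure is essential because it guarantees that every relevant $D$-neighborhood has the simple form $\{v, v_j\}$, permitting a clean identification of $G$ with a member of $\cS$.
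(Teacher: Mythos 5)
Your proposal is correct and follows essentially the same route as the paper, which treats the proposition as a direct unpacking of the definitions: the identification $\cG(S_k)=\cG_1(S_k)$ via the triangle-freeness of the star, the explicit partition $D\cup X\cup Y$ realizing $S(i_1,\dots,i_k)\in\cG_1(S_k)$, and the converse reindexing with $i_j\ge 2$ guaranteed by the pairs $X_j$. Your write-up is merely more detailed than the paper's brief justification in the paragraph preceding the statement.
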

%
The main result of this section is a characterization theorem for $(\gamma,\gamma_2)$-perfect graphs.

\begin{thm} 
	\label{thm:perfect}
	$ G $ is a connected $ (\gamma,\gamma_2)$-perfect graph if and only if $ G \in \cS. $
\end{thm}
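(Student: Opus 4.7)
\emph{Sufficiency.} My plan for the easy direction is as follows. If $G = S(i_1, \dots, i_k) \in \cS$, every vertex of $G$ other than the centre $v$ and the leaves has degree exactly $2$; consequently, in any induced subgraph $H\subseteq G$ with $\delta(H)\ge 2$, each subdivision-type vertex in $H$ drags both its neighbours ($v$ and some $v_j$) into $H$. A direct check then shows that $H$ is again of the form $S(i'_{j_1},\dots,i'_{j_m})\in\cS$, so it suffices to establish $\gamma=\gamma_2$ for a generic $G'\in\cS$: Lemma~\ref{lem:d2dom} gives $\gamma_2(G')=k'+1$ immediately, while $\gamma(G')\ge k'+1$ will follow from a short case analysis on whether the centre or each leaf appears in a purported dominating set.

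\emph{Necessity.} For the hard direction, assume $G$ is connected and $(\gamma,\gamma_2)$-perfect. Lemma~\ref{lem:0} gives $\delta(G)\ge 2$, and Proposition~\ref{prop:1} gives $G\in\cG(F)$ with partition $V(G)=D\cup X\cup Y$. The starting observation is that $\gamma(C_\ell)<\gamma_2(C_\ell)$ whenever $\ell\in\{3\}\cup\{n:n\ge 5\}$, so perfectness forbids any such induced cycle in $G$; in particular $G$ has no induced odd cycle, and the usual ``shortest odd cycle is chordless'' argument then forces $G$ to be bipartite.

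Next I would show that $G$ has no supplementary edges and that $F$ is a tree. For a putative supplementary edge $uv$ with $u,v\in X\cup Y$: if $N(u)\cap N(v)\cap D\neq\emptyset$, then $u$, $v$ together with a common $D$-neighbour form a triangle, contradicting bipartiteness. Otherwise, I would pick $v_p,v_q\in N(u)\cap D$ and $v_r,v_s\in N(v)\cap D$, apply Lemma~\ref{lem:2nghbrs} to the $F$-edges $v_pv_q$ and $v_rv_s$ to obtain non-adjacent pairs $\{u,u'\}$ and $\{v,v'\}$ with $N(\cdot)\cap D=\{v_p,v_q\}$ and $\{v_r,v_s\}$ respectively, and consider the induced subgraph $H$ on these eight vertices. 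Then $\delta(H)\ge 2$, and a short case analysis on which of $v_p,v_q,v_r,v_s$ lie in a $2$-dominating set yields $\gamma_2(H)=4$, while the supplementary edge $uv$ makes $\{u,u',v'\}$ a dominating set, so $\gamma(H)\le 3$---contradicting perfectness. Hence $X\cup Y$ is independent, $G$ is bipartite with parts $D$ and $X\cup Y$, and $F$ is forced to be connected. The same cycle-exclusion idea applied to $F$ itself then shows $F$ acyclic: a shortest $\ell$-cycle in $F$ would produce an induced $C_{2\ell}$ in $G$ (no chord can appear, because $D$ is independent, subdivision vertices have only two $D$-neighbours, and supplementary edges are forbidden), and $\ell\ge 3$ contradicts the preceding paragraph. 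Thus $F$ is a tree.

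The hardest step will be to show that $F$ has diameter at most~$2$. My plan here is: if $F$ contained a path $v_1v_2v_3v_4$, consider $H=G[\{v_1,v_2,v_3,v_4,x_{12}^1,x_{12}^2,x_{23}^1,x_{34}^1,x_{34}^2\}]$. A direct inspection gives $\delta(H)\ge 2$, and a case analysis on whether $v_1,v_4$ belong to a $2$-dominating set---using that each of them has only two neighbours in $H$---yields $\gamma_2(H)=4$. On the other hand $\{v_1,v_4,x_{23}^1\}$ dominates $H$, so $\gamma(H)\le 3<4=\gamma_2(H)$, contradicting perfectness. Therefore $F$ is a tree of diameter at most $2$, i.e.\ a star $S_k$. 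As stars are triangle-free, every supplementary vertex has exactly two $D$-neighbours, so $G\in\cG_1(S_k)$; combined with the absence of supplementary edges, Proposition~\ref{prop:perfect} concludes $G\in\cS$.
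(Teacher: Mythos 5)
Your proof is essentially correct, and the skeleton (Lemma~\ref{lem:d2dom} plus a degree count for sufficiency; cycle exclusion, then ``no supplementary edges'', then ``$F$ is a tree of diameter at most $2$'' for necessity) matches the paper's. But you reach the structural conclusions by a genuinely different route. The paper first upgrades the exclusion of \emph{induced} cycles to the exclusion of \emph{all} subgraph cycles $C_r$, $r\ne 4$, by checking three small auxiliary graphs $H_1,H_2,H_3$ (configurations of overlapping $4$-cycles); from this it gets that $F$ is a forest, hence triangle-free, hence $G\in\cG_1(F)$, and only \emph{then} kills supplementary edges. You instead stop at bipartiteness (shortest odd cycle is chordless), eliminate supplementary edges first, and only afterwards deduce that $F$ is acyclic via an induced $C_{2\ell}$ --- an argument that indeed needs the independence of $X\cup Y$ to guarantee chordlessness, so your ordering is internally consistent and avoids the $H_1,H_2,H_3$ case analysis entirely. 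Your connectivity of $F$ also comes for free from connectivity of $G$ once $X\cup Y$ is independent, whereas the paper runs a separate gadget argument for a supplementary edge joining two components. Finally, your $9$-vertex $P_4$-gadget (omitting one vertex of $X_{23}$) is a slightly leaner version of the paper's $10$-vertex $H_4$; your computation $\gamma_2=4>3\ge\gamma$ for it is correct.

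One point to tighten: in the supplementary-edge step you assert that Lemma~\ref{lem:2nghbrs} yields the pair $\{u,u'\}$ with $N(u)\cap D=\{v_p,v_q\}$. At that stage you have not yet excluded triangles in $F$, so a supplementary vertex $u\in Y$ could have three or more $D$-neighbours, and then $N(u)\cap D\ne\{v_p,v_q\}$. This is not fatal: the proof of Lemma~\ref{lem:2nghbrs} actually gives, for \emph{this} $u\in N(v_p)\cap N(v_q)$, a vertex $u'$ with $uu'\notin E(G)$ and $N(u')\cap D=\{v_p,v_q\}$; since in your second case $N(u)\cap D$ and $N(v)\cap D$ are disjoint, the four $D$-vertices $v_p,v_q,v_r,v_s$ still have exactly two neighbours each inside $H=G[\{v_p,v_q,v_r,v_s,u,u',v,v'\}]$, which is all your counting argument for $\gamma_2(H)\ge 4$ uses, and $\{u,u',v'\}$ still dominates $H$. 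So either insert this observation, or reorder as the paper does and first rule out triangles in $F$ (e.g.\ via the induced $C_6$ or $C_3$ arising from a triangle of $F$) before treating supplementary edges.
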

\begin{proof}
	We first prove that if $ G \in \cS $, then it is $ (\gamma,\gamma_2) $-perfect graph. 
	By Proposition \ref{prop:perfect}, we know that $ G \in \cG_1(F) $, where $ F \cong S_{k} $ for $ k\geq 1 $. Then, by Lemma \ref{lem:d2dom},   $ \gamma_2(G) = |V(F)|= k+1$. Since a minimal $ 2 $-dominating set is a dominating set, we have the inequality $ \gamma(G) \leq k+1 $. In order to prove that $\gamma(G)=\gamma_2(G)$, it is enough to show that $ \gamma(G) > k $. Suppose, to the contrary, that $ D' $ is a minimum dominating set of $ G $ such that $ |D'|\leq k $.
	
	Consider the vertices of $ G $ corresponding to the end vertices of the star $ S_{k} $. Let $ \{v_1,v_2,\dots, v_k\} = V(F)\setminus \{v\} \subseteq V(G)$, where $ v $ is the center of $ F\cong S_{k} $. Since $D'$ is a dominating set,  $ |N_G[v_j]\cap D'| \geq 1 $ for each $ j \in [k] $. Note that the closed neighborhoods of any two vertices from the set $ \{v_1,v_2,\dots,v_k\} $ are disjoint. Since $ |D'|\leq k $ by our assumption, we have $v\notin D'$ and $ |N_G[v_j] \cap D'|=1 $, for every $ j\in[k] $. Moreover, as the center $v$ must also be dominated, there exists some $ j \in [k] $ and $ r\in [i_j] $ such that  $ x_j^r \in D' $. Then, $ v_j \notin D' $  and the vertices in $ (X_j \cup Y_j)\setminus \{x_j^r\} $ are not dominated by $ D' $, which is a contradiction. Consequently, $ k $ vertices are not enough to dominate all the vertices of $ G $, that is, $ \gamma(G) \geq k+1 $. It follows that $ \gamma(G) = \gamma_2(G) $ for any $ G \in \cS $.
	
	Next, suppose that $ H $ is an induced subgraph of $ G $ with minimum degree at least two. If $H$ does not contain any subdivision vertices, we have $ \delta(H)=0 $,  a contradiction. Thus, $ H $ contains a subdivision vertex. Let $ x_p^q \in V(H)$ for some $ p\in [k] $ and $ q\in [i_p] $. Since $ \deg_G(x_p^q)=2 $, then both of the neighbors of $ x_p^q $ must be in $ V(H) $, i.e., $ N_G(x_p^q) = \{v,v_p\}\subseteq V(H) $. Since $ \delta(H) \geq 2 $ by the assumption, using an argument similar to the above, we have $ \deg_H(v_p)\geq 2 $. Thus, $ |(X_p \cup Y_p)\setminus \{x_p^q\})\cap V(H)| \geq 1 $. Consequently, $ H \in \cS $ and, as it was proved above, $ \gamma(H) = \gamma_2(H) $ holds for every induced subgraph of $ G $ with minimum degree at least two.
	
	To prove the converse, assume that $ G $ is a connected $ (\gamma, \gamma_2) $-perfect graph.	Note that $ \gamma(C_n) = \ceil{\frac{n}{3}} \ $ and $ \gamma_2(C_n) = \ceil{\frac{n}{2}}$, where $ n\ge 3 $. Thus, the $(\gamma,\gamma_2)$-perfect graph $ G $ does not contain an induced cycle $ C_n $, where $ n =3 $ or  $ n\geq 5 $.
	
	\medskip
	\begin{figure}[h]
		\centerline{\includegraphics[width=0.5\textwidth]{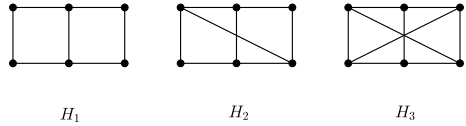}}
		\caption{The graphs $ H_1 $, $ H_2 $ and $ H_3 $}
		\label{fig:induced3Graphs}
	\end{figure}
	Now, suppose that $ G $ has a non-induced subgraph isomorphic to $ C_r $, for some $ r\geq 5 $. Since all of its induced cycles are $ 4 $-cycles, $ G $ contains at least one of the three graphs $ H_1, H_2 $ and $ H_3 $, shown in Figure \ref{fig:induced3Graphs}, as an induced subgraph. Observe that $ \gamma(H_i) < \gamma_2(H_i) $ for all $ i \in \{1,2,3\} $. This contradicts our assumption that $ G $ is a $ (\gamma, \gamma_2)$-perfect graph. Thus, $ G $ does not contain a cycle $ C_r $, where $ r\neq 4 $.
	
	Since $ G $ is $ (\gamma, \gamma_2)$-perfect by the assumption, then the equality $ \gamma(G)=\gamma_2(G) $ holds. By Proposition \ref{prop:1}, we know that $ G\in \cG $. Thus, if $ D $ is a minimum $ 2 $-dominating set of $ G $, then $ D $ is independent and $ F=G^2[D] $ is the underlying graph of $ G $.
	
	First, note  that $ F $ does not contain a cycle $ C_r $ for $ r\geq 3 $. Otherwise, $ G $ would contain a subgraph isomorphic to $ C_{2r} $, which is a contradiction. Thus, $ F $ is a forest and $G\in \cG_1(F)$. Then suppose that $ F $ is not connected. Since $ G $ is connected, there is a supplementary edge $ e = uv $, where $ u $ and $ v $ are two subdivision vertices of $ G $ such that $N(u)\cap V(F)$ and $N(v)\cap V(F)$ are in different components of $ F $. By the definition of the graph class $ \cG_1$, there are two vertices $ u' $ and $ v' $ such that $N_G(u) \cap V(F) = N_G(u') \cap V(F)  $ and $N_G(v) \cap V(F) = N_G(v') \cap V(F)  $. Let $ \{x_1,x_2\} = N_G(u) \cap V(F) $ and $ \{x_3,x_4\} = N_G(v) \cap V(F) $, where the sets $ \{x_1,x_2\} $ and $ \{x_3,x_4\} $ are contained by different components of $ F $. Consider the set $ A = \{x_1,x_2,x_3,x_4,u,v,u',v'\} $ and the induced subgraph  $ G[A] $. It is easy to check that $\delta(G[A]) \ge 2$,  $\gamma(G[A])\le 3$ and $\gamma_2(G[A])=4 $, which is a contradiction. Thus, $ F $ is a tree.
	
	Suppose that $ G $ has a supplementary edge $ e=uv \in E(G) $, where $ u,v \in V(G)\setminus V(F)  $. Let $ N_G(u) \cap V(F) = \{x_1,x_2\} $ and $ N_G(v) \cap V(F) = \{x_3,x_4\} $. Note that $|\{ x_1,x_2\} \cap \{x_3,x_4\}|\leq 1 $, otherwise $ G $ would contain a subgraph isomorphic to $ C_3 $. By Lemma~\ref{lem:2nghbrs}, there exist two further vertices $u'$ and $v'$ satisfying $ N_G(u') \cap V(F) = \{x_1,x_2\} $ and $ N_G(v') \cap V(F) = \{x_3,x_4\} $.  If  $|\{ x_1,x_2\} \cap \{x_3,x_4\}|= 1 $, then without loss of generality, assume that $ x_2 = x_3 $. Then, there is a subgraph of $ G $ isomorphic to $ C_3 $ induced by the vertices $ u$, $v$ and  $x_2 $, a contradiction. If  $\{ x_1,x_2\} \cap \{x_3,x_4\} = \emptyset$, then let $ S= \{x_1,x_2,x_3,x_4,u,v,u',v'\}$. A similar argument applied to the  subgraph  of $ G $ induced by the vertex set $ S $ yields the inequality $ \gamma(G[S])\le 3 < \gamma_2(G[S])=4$. Thus, $ G $ does not have any supplementary edges.
	
	\medskip
	\begin{figure}[h]
		\centerline{\includegraphics[width=0.3\textwidth]{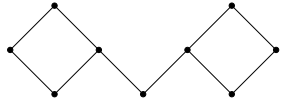}}
		\caption{The graph $ H_4 $}
		\label{fig:H4}
	\end{figure}
	
	Suppose that $ F $ contains a subgraph isomorphic to $ P_4 $. Since $ G $ does not have a supplementary edge, it contains an induced subgraph isomorphic to $ H_4 $ given in Figure \ref{fig:H4}. Note that $ \delta(H_4) \geq 2 $  and $ 3=\gamma(H_4) < \gamma_2(H_4) = 4 $, which contradicts the assumption that $ G $ is $ (\gamma, \gamma_2)$-perfect. Thus, $ F $ is a star, $G\in \cG_1(F)$, and $G$ does not contain supplementary edges. This finishes the proof by Proposition~\ref{prop:perfect}.
\end{proof}

The graph obtained from an edge by attaching two pendant edges to both of its ends will be called $ T_6 $ (for illustration see Fig.~\ref{fig:T}).

\medskip
\begin{figure}[h]
	\centerline{\includegraphics[width=0.15\textwidth]{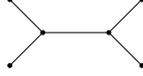}}
	\caption{The graph $ T_6 $}
	\label{fig:T}
\end{figure}
\begin{prop}
	\label{prop:S}
	$ G \in \cS $ if and only if $ G $ is a connected graph with $ \delta(G)\geq 2 $ and it contains no subgraph isomorphic to any of  $T_6,P_8$, or  $C_k $ where $ k\neq 4 $.
\end{prop}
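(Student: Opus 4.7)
The plan is to prove both directions by direct structural analysis. For the forward direction, assuming $G = S(i_1, \dots, i_k) \in \cS$, connectedness and $\delta(G) \geq 2$ are immediate from the construction, every edge is incident to some subdivision vertex of degree $2$ (ruling out $T_6$), every cycle must use two subdivisions of a common $X_j$ together with $v$ and $v_j$ and hence has length $4$, and any path can use $v$ at most once — a short case analysis then bounds its vertex count by $7$, ruling out $P_8$. So the real work lies in the converse.

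For the converse, assume $G$ satisfies the hypothesis and fix a vertex $v$ of maximum degree $d$. If $d = 2$ then $G$ is a cycle and the absence of $C_k$ for $k \neq 4$ forces $G = C_4 = S(2)$. Otherwise $d \geq 3$, and the no-$T_6$ condition forces every $x \in N(v) = \{x_1, \dots, x_d\}$ to have degree exactly $2$; let $y_i$ denote the unique second neighbor of $x_i$, and partition $N(v)$ into groups of sizes $i_1, \dots, i_k$ by the distinct values $w_1, \dots, w_k$ of the $y_i$'s. The plan is then to prove (a) each $i_j \geq 2$, and (b) $\deg(w_j) = i_j$ for every $j$. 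Given (a) and (b), the degrees of $v$, each $x_i$, and each $w_j$ are all saturated inside $\{v\} \cup \{x_1, \dots, x_d\} \cup \{w_1, \dots, w_k\}$, and the connectedness of $G$ then forces $V(G)$ to equal this set; the edge set is exactly that of $S(i_1, \dots, i_k)$, so $G \in \cS$. The case $k = 1$ of (b) is immediate from the maximality of $\deg(v) = d$, which yields $\deg(w_1) \leq d = i_1$.

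The substantive step is to prove (a) and the $k \geq 2$ case of (b) by producing a forbidden $P_8$. For (a), if some group $\{x_s\}$ with $y_s = w^*$ were a singleton, then $w^*$ has a neighbor $u \neq x_s$, and $u \notin N(v)$ (since $u = x_r$ would force $y_r = w^*$ and hence $x_r$ would lie in the singleton group); choosing any second group (which exists since $d \geq 3$ forces $k \geq 2$) with representative $x_t$ and $y_t = w_t$, any $u' \in N(w_t) \setminus \{x_t\}$, and any $z \in N(u) \setminus \{w^*\}$, one obtains the 8-vertex path
\[ z - u - w^* - x_s - v - x_t - w_t - u'. \]
An analogous path $z - u - w_j - x_\alpha - v - x_\beta - w_{j'} - x_\gamma$ with $x_\beta, x_\gamma$ distinct members of $G_{j'}$ (supplied by (a)) handles (b). The main obstacle is the bookkeeping of distinctness along these eight vertices: each would-be coincidence is blocked either by the bipartiteness of $G$ (which holds since odd cycles are forbidden), by the degree-$2$ constraint on the $x_i$'s, or by a forbidden $C_6$ of shape $v - x_s - w^* - u - w_{j'} - x_t - v$ that arises whenever $u$ becomes adjacent to two of the $w$'s.
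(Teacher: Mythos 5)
Your proof is correct, but it takes a genuinely different route from the paper's. For the converse, the paper argues globally: it first shows $G$ is bipartite and bridgeless (a bridge would separate two $4$-cycles and force a $P_8$), so every edge lies on a $4$-cycle; it then defines, for each degree-$2$ vertex $v$, the opposite vertex $f(v)$ on its $4$-cycle, sets $A=\{v: \deg(v)\ge 3 \text{ or } \deg(f(v))\ge 3\}$, proves both $A$ and its complement are independent, and concludes $G^A\in\cG_1(F)$ with no supplementary edges; forbidding $C_{\ge 6}$ makes the underlying graph $F$ a tree and forbidding $P_8$ kills any $P_4$ in $F$, so $F$ is a star and Proposition~\ref{prop:perfect} finishes. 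You instead work locally from a maximum-degree vertex $v$: the $T_6$/$C_3$ exclusion pins every neighbor of $v$ to degree $2$, you group $N(v)$ by second neighbors, and you force the two structural facts ($i_j\ge 2$ and $\deg(w_j)=i_j$) by exhibiting explicit $P_8$'s whose vertex-distinctness is policed by bipartiteness, the degree-$2$ constraint, and forbidden $C_5$/$C_6$'s; saturation of degrees plus connectedness then identifies $V(G)$ and $E(G)$ with those of $S(i_1,\dots,i_k)$ outright. Your version is self-contained (it bypasses $\cG_1$, the bipartition $A$, and Proposition~\ref{prop:perfect} entirely) and reconstructs the target graph explicitly, at the price of the distinctness bookkeeping along the eight path vertices, which you have correctly identified and which does check out (e.g.\ a coincidence of the two endpoints yields a forbidden $C_7$, and $u$ adjacent to two of the $w$'s yields a forbidden $C_5$ or $C_6$); the paper's version is less case-heavy at the path level but leans on the independence argument for $A$ and on the earlier machinery of Section~4.
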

\begin{proof}
	If $ G \in \cS $, then it is easy to see that $ G $ is a connected graph with $ \delta(G)\geq 2 $ and it does not contain a subgraph isomorphic to  $T_6,P_8$, or  $C_k $ where $ k\neq 4 $.
	
	Now, assume that $ G $ is a connected graph of minimum degree at least two which does not contain a subgraph isomorphic to $T_6,P_8$, or  $C_k $ where $ k\neq 4 $. Note that $G$ is bipartite. We further have $\min\{\deg_G(u),\deg_G(v)\}=2$ for each edge $uv \in E(G)$, since $\delta(G) \ge 2$ and $G$ does not contain a subgraph isomorphic to $T_6$ or $C_3$.
	
	First, suppose that $ G $ contains an edge $ e=uv \in E(G) $, which is a bridge. Then $ G-e $ has two components, say $ G_1 $ and $ G_2$. Since $ \delta(G)\geq 2 $, both $G_1$ and $G_2$ are non-trivial graphs and may contain at most one vertex, namely either $u$ or $v$, which is of degree 1. Thus, both of the components contain a cycle. These cycles must be vertex-disjoint $ 4 $-cycles with a path between them. Hence, $ G $ contains a subgraph isomorphic to $ P_8 $ and this contradicts our assumption.

	Since $ G $ does not contain a bridge, every edge of $ G $ lies on a $ 4 $-cycle. If all the vertices of $ G $ have degree two, then $ G $ is isomorphic to $ C_4 $ and $ G \in \cS $. If $ G $ is not isomorphic to $ C_4 $, then every $ 4 $-cycle contains a vertex of degree at least three. For a vertex $ v $ of degree two, we define the function $ f(v) $ to denote the vertex opposite to $ v $ in a $4 $-cycle. Let $ A = \{v \in V(G): \deg(v) \geq 3$  or  $\deg(f(v))\geq 3\} $. 
	Consider two vertices $ u,v \in A $. If $ uv \in E(G) $, then $ uv $ belongs to a $ 4 $-cycle, say $ uvv'u' $. At least one of $ u $ and $ v $ is of degree two, without loss of generality, say $ \deg(u)=2 $. Thus, $ u $ belongs only to this $ 4 $-cycle. Since $ f(u) = v' $, by the definition of $ A $, $ \deg(v')\geq 3 $. If $ \deg(v)\geq 3 $, then $ vv' \in E(G) $, we have a contradiction. If $ \deg(v)= 2 $, then  $ v\in A $ and $ v $ belongs only to the $ 4 $-cycle $ uvv'u' $. Thus, $ f(v)=u' $,  $ \deg(u')\geq 3 $ and $ u'v' \in E(G) $, which is a contradiction. Hence, $ A $ is independent.
	Consider two vertices $ u,v \in V(G) \setminus A $. If $ uv \in E(G) $, then at least one of $ f(u) $ or $ f(v) $ is of degree at least three. Then, by the definition of the function $ f $, we have $ u \in A $ or $ v \in A $, which is a contradiction. Hence, $ V(G) \setminus A $ is independent.
	
	Consequently,  $ (A, V(G)\setminus A) $ is a bipartition of $ V(G) $.  Note that every $ 4 $-cycle has exactly two vertices in $ A $. Hence, $G^A \in \cG_1(F)$ where $F\cong G^2[A]$, and there are no supplementary edges. Since $G$ does not have a subgraph isomorphic to $C_n$ for $n\ge 6$, the underlying graph is a tree. If $ F $ contains a subgraph isomorphic to $ P_4 $, then $ G $ contains a subgraph isomorphic to $ P_8 $, which is a contradiction. Thus, $ F $ is a star, and Proposition~\ref{prop:perfect} implies that $G\in \cS$.
\end{proof}
Thus, Proposition~\ref{prop:S} allows us to state Theorem~\ref{thm:perfect} in a different form as follows.	
\begin{thm}
	Let $ G $ be a connected graph with $ \delta(G)\geq 2 $. Then $ G $ is a $ (\gamma,\gamma_2 )$-perfect graph if and only if $ G $ contains no subgraph isomorphic to any of  $T_6,P_8$, or  $C_k $ where $ k\neq 4 $.
\end{thm}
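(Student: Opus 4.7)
The plan is to observe that this theorem is essentially an immediate corollary of the two results just established, namely Theorem~\ref{thm:perfect} and Proposition~\ref{prop:S}. So I would write a very short proof that simply chains the two equivalences.

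First I would note that Theorem~\ref{thm:perfect} gives the equivalence: $G$ is a connected $(\gamma,\gamma_2)$-perfect graph if and only if $G\in \cS$. Then I would invoke Proposition~\ref{prop:S}, which states that, for a graph with $\delta(G)\ge 2$, membership in $\cS$ is equivalent to the forbidden-subgraph condition (no subgraph isomorphic to $T_6$, $P_8$, or $C_k$ for $k\ne 4$). Chaining the two biconditionals yields exactly the stated characterization, under the standing assumption that $G$ is connected and satisfies $\delta(G)\ge 2$.

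Since there is nothing new to prove beyond the combination of the two preceding results, there is no real obstacle here; the work has already been done. The only thing to be careful about is that the hypothesis $\delta(G)\ge 2$ is needed to apply Proposition~\ref{prop:S} (which requires it explicitly) and is also built into the definition of a $(\gamma,\gamma_2)$-perfect graph via Lemma~\ref{lem:0}, so the two formulations are consistent. I would therefore write a one-paragraph proof of the following form: ``By Theorem~\ref{thm:perfect}, $G$ is $(\gamma,\gamma_2)$-perfect if and only if $G\in \cS$. By Proposition~\ref{prop:S}, a connected graph with $\delta(G)\ge 2$ belongs to $\cS$ if and only if it contains no subgraph isomorphic to $T_6$, $P_8$, or $C_k$ for any $k\ne 4$. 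Combining these two equivalences completes the proof.''
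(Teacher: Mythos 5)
Your proposal is correct and matches the paper exactly: the paper presents this theorem as an immediate restatement obtained by chaining Theorem~\ref{thm:perfect} ($G$ is $(\gamma,\gamma_2)$-perfect iff $G\in\cS$) with Proposition~\ref{prop:S} (the forbidden-subgraph description of $\cS$), giving no further argument. Your remark that the hypothesis $\delta(G)\ge 2$ is needed for Proposition~\ref{prop:S} and is consistent with the definition of $(\gamma,\gamma_2)$-perfection is also accurate.
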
		

Note that for any $G\in \cS$, the center of the underlying star can be chosen as a vertex $v$ of degree $\Delta(G)$ and then, the subdivision vertices are exactly those contained in $N_G(v)$. Therefore, the characterization given in Theorem~\ref{thm:perfect} directly yields a polynomial-time algorithm which recognizes $(\gamma, \gamma_2)$-perfect graphs.

\section{Concluding remarks and open problems} \label{sec:5}

In Section 1, we defined the graph class $\cG$ which contains all $(\gamma, \gamma_2)$-graphs. Then, in Section 2, we gave a characterization for $(\gamma, \gamma_2)$-graphs over a specified subclass $\cH$ of $\cG$. In the definition of $\cH$ and in the proof of the main theorem, we referred to the properties of the
\begin{figure}
	\centerline{\includegraphics[width=0.7\textwidth]{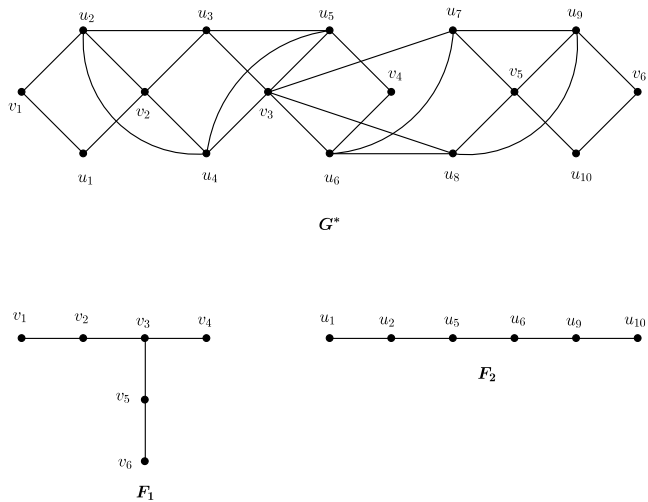}}
	\caption{$ G^* $ is a graph with $ \gamma (G^*)= \gamma_2 (G^*) = 6$, which has two non-isomorphic underlying graphs and $ G^* \in \cH(F_1) \cap \cH(F_2) $.}
	\label{fig:nonIso}
\end{figure}
underlying graph. We noted there that the underlying graph is not always unique when a graph $G$ from $\cG$ is given.  In Figure \ref{fig:nonIso}, we show a $ (\gamma,\gamma_2) $-graph having two non-isomorphic underlying graphs. Analogously, one can construct infinitely many graphs with the same property.

In the definition of the class $\cH$, we forbid $ 3 $-cycles and $ 4 $-cycles in the underlying graph. The characterization given in Theorem~\ref{thm:2} does not hold if $ 3 $-cycles are not forbidden in the underlying graph. This is shown by the graph $A_4^* \in \cG_2(F)$ (see Figure~\ref{fig:A4_star}), where the underlying graph $F$ is a star supplemented by an edge. One can readily check that even if $A_4^*$ contains an induced $A_4^W$ subgraph, it remains a $(\gamma, \gamma_2)$-graph as $\gamma(A_4^*)=\gamma_2(A_4^*)=5$. Similarly, it is possible to construct graphs whose underlying graphs are $C_3$-free but not $C_4$-free such that the statement of Theorem~\ref{thm:2} does not remain valid for them. Therefore, the following problems are still open.

\begin{figure}[h]
	\centerline{\includegraphics[width=0.3\textwidth]{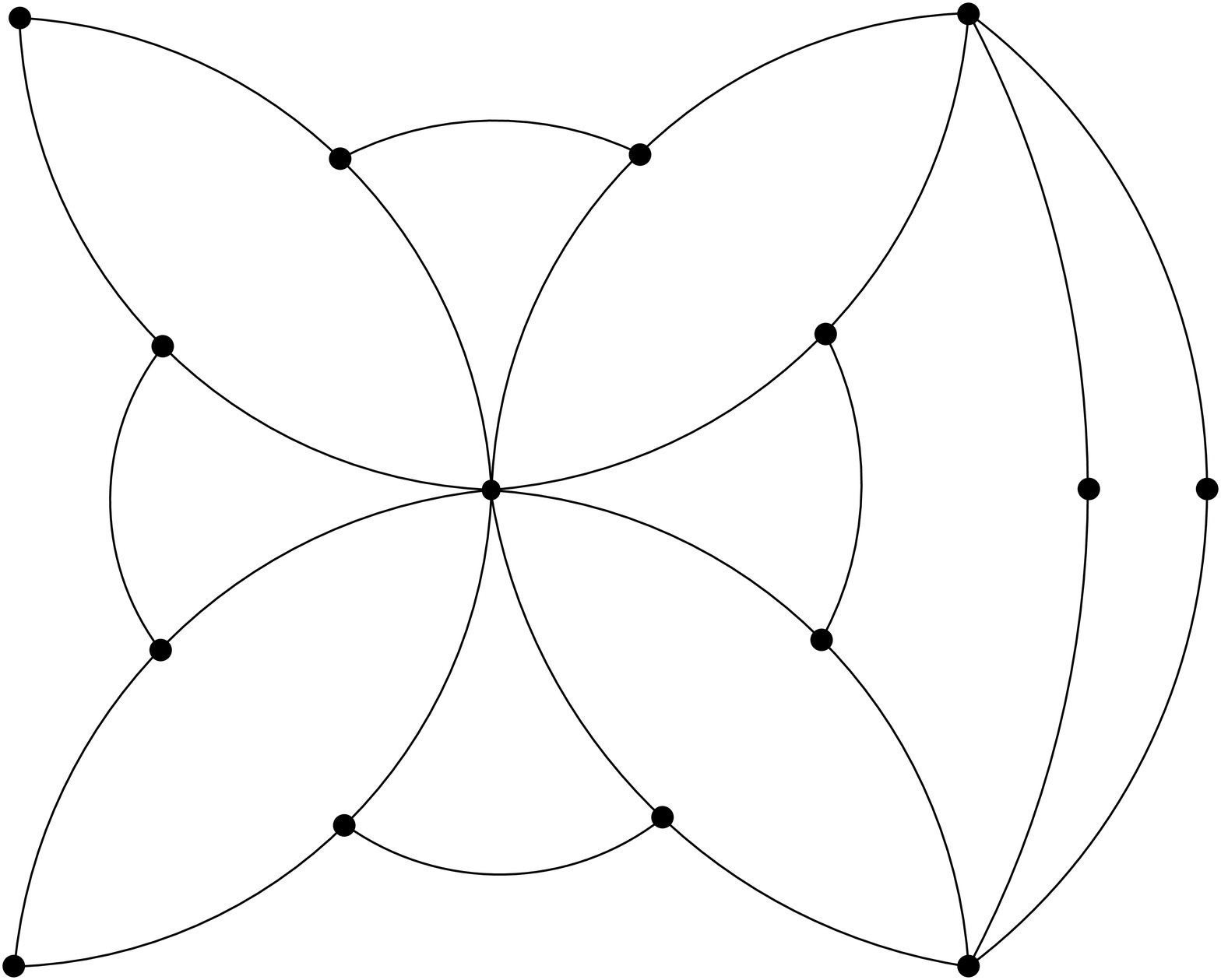}}
	\caption{The graph $ A_4^* $ }
	\label{fig:A4_star}
\end{figure}  
\begin{prob}
	Characterize $(\gamma, \gamma_2)$-graphs over the following graph classes:
	\begin{enumerate}
		\item Over the subclass of $\cG_2$ where the underlying graph does not contain any $C_4$ subgraphs;
		\item Over the subclass of $\cG_2$ where the underlying graph is $C_3$-free;
		\item Over $\cG_2$.
	\end{enumerate}
\end{prob}

\vspace{1.5cc}

\noindent \textbf{Acknowledgment.} Research of Csilla Bujt\'as was partially supported by the Slovenian Research Agency under the project N1-0108.

\vspace{2cc}

\bibliographystyle{abbrv}
\bibliography{references}

\end{document}